\theoremstyle{plain}
\newtheorem{definition}{Definition}
\newtheorem{theorem}{Theorem}
\newtheorem{lemma}[theorem]{Lemma}
\newtheorem{corollary}[theorem]{Corollary}
\newtheorem{proposition}[theorem]{Proposition}
\newtheorem{exmp}{Example}
\newtheorem{remark}{Remark}
\title{Constrained Motion Spaces of Robotic Arms}
\author{Jack Pierce}
\date{\today}
\begin{document}

\maketitle

\begin{abstract}
	In this paper, we develop the theory of constrained motion spaces of robotic arms.
	We compute their homology groups in two cases:
	when the constraint is a horizontal line
	and when it is a smooth curve whose motion space is a smooth manifold. 
	We show the computation of homology amounts to counting the collinear configurations, 
	reducing a topological problem to a combinatorial problem. 
	Our results rely on Morse theory, along with Walker's and Farber's work on polygonal linkages.
\end{abstract}

%%%%%%%%
%% introduction
%%%%%%%%

\section{Introduction}\label{introduction}

The objective of this paper is to compute the homology of motion spaces of robotic arms
constricted to lines in $\mathbb R^2$. 
A robotic arm is analogous to a sequence of rods attached by hinges, 
%% changed "can be thought of as" 
%% to "is analogous to" 
where each edge rotates in two dimensions. 
%% changed "can rotate" to "rotates"
The motion space is the set of all configurations of the robotic arm 
with one edge attached to the origin. 
A constrained motion space is a subspace of the motion space 
where the last vertex is restricted to a subspace of $\mathbb R^2$. 
This definition of motion space better captures the essence of a robotic arm 
than the usual ``configuration" or ``moduli space" (see \cite{walker})
because in real life applications, we often care more about the path
of the ``hand" (last vertex) than the shape of the arm itself. 
We focus on two cases: 
first, when the constraint is a horizontal line; 
%% changed this from new sentence to clauses separated by ; 
second, when the constraint is a smooth curve, with some conditions. 
We hope in the future our results will be extended to any smooth curve in $\mathbb R^2$. 
%% changed "can be extended" to "will be extended"
We will compute the homology using Morse theory. 
Our strategy is adopted from the work of Farber in \cite{farber_polygon}. 

\begin{figure}[h]
	\centering
	\includegraphics[width=0.5\textwidth]{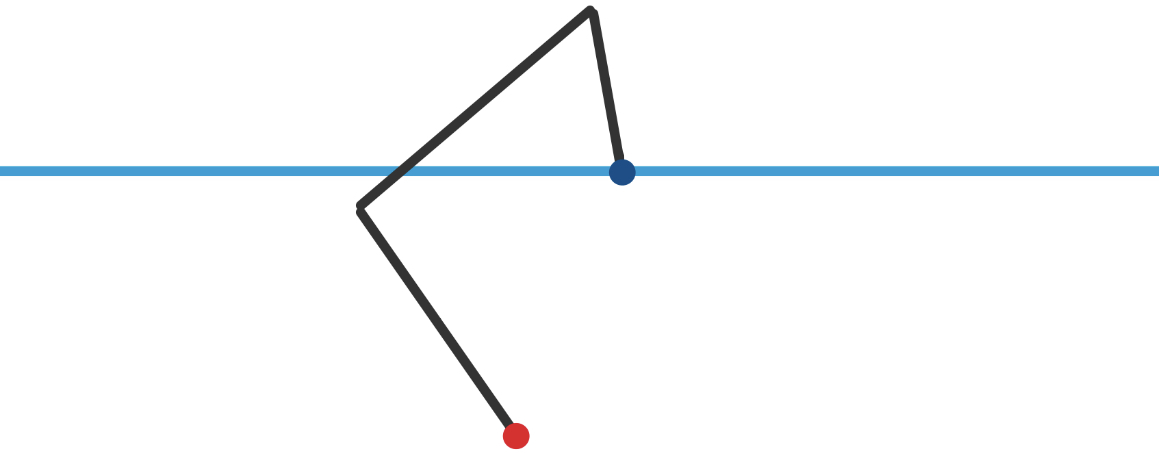}
	\caption{Motion space constricted along a horizontal line}
	\label{fig:y=h_new}
\end{figure}

For the first case, 
let $(\ell_1, \ldots, \ell_k)$ be the lengths of each edge of the robotic arm, 
and let $y = h$ be a horizontal line in $\mathbb R^2$ where $h$ is some real number. 
If we arrange each of the edges to be vertical, i.e. laying on the y-axis, 
then the height of the last vertex is 
\begin{equation}
	\sum_{i \in J} \ell_i - \sum_{i \notin J} \ell_i
\end{equation}
where $J \subseteq \{ 1, \ldots, k \}$ 
is the set of indices of the edges pointed positively (with respect to the y-axis). 
Let $a_j$ be the number of subsets $J \subseteq \{1, \ldots, k \}$ 
such that $|J| = j$ and 
\begin{equation}
	\sum_{i \in J} \ell_i - \sum_{i \notin J} \ell_i \leq -|h|.
\end{equation}
Similarly, let $b_j$ be the number of subsets $J \subseteq \{1, \ldots, k \}$ 
such that $|J| = j$ and 
\begin{equation}
	\sum_{i \in J} \ell_i - \sum_{i \notin J} \ell_i > |h|.
\end{equation}
Furthermore, let $A_h$ be motion space of the robotic arm
constricted to the horizontal line $y = h$. 
With this notation, we state the first result of this paper. 
%% changed "we can state" to "we state" 

%% Theorem 1
\begin{theorem}\label{bigtheorem1}
	$H_j(A_h; \mathbb Z)$ is free abelian with rank $a_j + b_{j + 1}$.
\end{theorem}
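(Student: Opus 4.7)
The plan is to compute $H_*(A_h; \mathbb{Z})$ via Morse theory on $A_h$, adapting Farber's strategy for polygonal linkages~\cite{farber_polygon}. Observe first that $A_h = \varphi^{-1}(h)$ where $\varphi: T^k \to \mathbb{R}$ is defined by $\varphi(\theta_1, \ldots, \theta_k) = \sum_i \ell_i \sin \theta_i$. The critical points of $\varphi$ are the $2^k$ all-vertical configurations (with $\theta_i \in \{\pm \pi/2\}$), and their critical values are exactly the subset sums $d_J := \sum_{i\in J}\ell_i - \sum_{i\notin J}\ell_i$. Under the standing generic assumption that $|h| \ne |d_J|$ for every $J$, the level set $A_h$ is therefore a smooth compact $(k-1)$-manifold. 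Without loss of generality, assume $h > 0$.

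The natural Morse function to use is $X: A_h \to \mathbb{R}$, $X(\theta) = \sum_i \ell_i \cos \theta_i$, i.e., the $x$-coordinate of the arm's endpoint. By Lagrange multipliers, $\theta$ is critical for $X$ iff $\nabla X$ is parallel to $\nabla \varphi$, which forces $\tan \theta_i$ to be the same for every $i$; in other words, the critical configurations are exactly the collinear ones, with all edges parallel to some common line. Parameterize each such configuration by a common direction $\alpha \in (0, \pi)$ and a subset $J \subseteq \{1, \ldots, k\}$ of edges pointing in direction $\alpha$ (the rest pointing in direction $\alpha + \pi$); the constraint $\varphi(\theta) = h$ becomes $d_J \sin \alpha = h$, which admits two solutions $\alpha_0 \in (0, \pi/2)$ and $\pi - \alpha_0$ precisely when $d_J > h$, i.e., when $J$ is a $b$-type subset.

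Next, a computation of the bordered Hessian of $X$ restricted to $T_\theta A_h = \{v \in \mathbb{R}^k : \sum_i \sigma_i \ell_i v_i = 0\}$ (with $\sigma_i = +1$ if $i \in J$ and $\sigma_i = -1$ otherwise), combined with the standard formula $\operatorname{sig}(Q|_H) = \operatorname{sig}(Q) - \operatorname{sign}(w^* Q^{-1} w)$ for the signature of a non-degenerate quadratic form restricted to a hyperplane, will show that the two critical points associated to each such $J$ have Morse indices $|J| - 1$ and $k - |J|$, with the assignment to $\alpha_0$ versus $\pi - \alpha_0$ determined by the sign of $\cos 2\alpha_0$ (equivalently, by whether $d_J \gtrless \sqrt{2}\,h$). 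Invoking the complementation $J \leftrightarrow J^c$, which gives $d_{J^c} = -d_J$ and hence $b_{k-j} = a_j$ for a regular value $h$, the number of index-$j$ critical points becomes $b_{j+1} + b_{k-j} = a_j + b_{j+1}$, matching the claimed rank.

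The remaining obstacle, and the main difficulty, is to deduce from the index count that $H_j(A_h; \mathbb{Z})$ is actually free of this rank, i.e., that $X$ is a perfect Morse function. My plan is to exploit the $\mathbb{Z}/2$-involution $\iota: (\theta_i) \mapsto (\pi - \theta_i)$ of $A_h$, which preserves $\varphi$, satisfies $X \circ \iota = -X$, and swaps the two critical points associated to each $J$. Combined with the Poincar\'e duality symmetry $c_j = c_{k-1-j}$ on the closed orientable manifold $A_h$ (which our count already exhibits) and the clean separation of the critical points into two families by the sign of $X$, this should force the Morse complex differentials to vanish, following the line of argument in \cite{farber_polygon} and drawing on input from \cite{walker}.
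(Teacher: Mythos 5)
Your overall strategy --- run Morse theory directly on $A_h$ with $X = s_1|_{A_h}$ --- is genuinely different from what the paper does. The paper never restricts a Morse function to $A_h$; instead it does Morse theory \emph{on the ambient torus} $M(\mathcal L_k)\cong\mathbb T^k$ with the height function $s_2$, computes the homology of the sublevel set $F_{h+\delta}=s_2^{-1}(-\infty,h+\delta]$ and the supralevel set $G_{h-\delta}$ via Farber's Lemmas~\ref{farber4} and~\ref{farber5}, shows $F_{h+\delta}\cap G_{h-\delta}$ deformation retracts onto $A_h$ (Lemma~\ref{def_retract}), and finishes with a Mayer--Vietoris sequence whose connecting maps are read off from the explicit torus bases $\{[S_J]\}$, $\{[T_J]\}$. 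That route handles critical $h$ automatically by perturbing to $h\pm\delta$, whereas your proposal only treats regular $h$.

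Your index computation is essentially right in outcome but contains a false claim. Carrying out the bordered-Hessian calculation at a collinear critical point with common direction $\alpha$, one finds the Hessian of $X-\lambda s_2$ is diagonal with entries $\mp\ell_i/\cos\alpha$ (sign $-$ for $i\in J$, $+$ for $i\notin J$), and $w^{T}H^{-1}w=-d_J\cos\alpha$ for the constraint covector $w=(\ell_i\sigma_i)$. Hence the restricted index is $|J|-1$ at $\alpha_0\in(0,\pi/2)$ and $k-|J|$ at $\pi-\alpha_0$, \emph{independently} of $\cos 2\alpha_0$; the ``$d_J\gtrless\sqrt2\,h$'' dichotomy you propose does not occur. (Check against Example~\ref{ex:k2half}: for any regular $h\in(0,1)$ the maximum of $X$ is always at $\alpha_0$, with index $1=|J|-1$.) Since only the unordered pair of indices enters the count, $c_j=b_{j+1}+b_{k-j}=b_{j+1}+a_j$ still comes out right.

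The real gap is the perfectness step. Your involution $\iota:\theta_i\mapsto\pi-\theta_i$ satisfies $X\circ\iota=-X$, not $X\circ\iota=X$, and it has no fixed points among the critical configurations (it swaps the $\alpha_0$ and $\pi-\alpha_0$ points). So it fails hypotheses (1) and (2) of Lemma~\ref{farber4}, and the appeal to that lemma to kill the Morse differential does not go through. The assertion that Poincar\'e duality plus the sign-of-$X$ separation ``should force the Morse complex differentials to vanish'' is not an argument: both families contribute to each intermediate index, and nothing you have said pins down $\partial_j$ for $1<j<k-1$. The fix is available in the paper but in the \emph{other} proof: for Theorem~\ref{bigtheorem2} the paper uses the ``moving'' involution
\[
\tau(\theta_1,\ldots,\theta_k)=(2\theta-\theta_1,\ldots,2\theta-\theta_k),
\]
where $\theta$ is the argument of the endpoint $s(\theta_1,\ldots,\theta_k)$. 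This reflects each configuration across the ray from the origin through its own endpoint, \emph{fixes} the endpoint (hence preserves $A_h$ and the Morse function $f=\gamma^{-1}\circ s = X$ up to reparameterization), and its fixed points are exactly the collinear configurations. Using that $\tau$ in place of $\iota$ would make Lemma~\ref{farber4} applicable and close your argument for regular $h$. Alternatively, you could switch to the paper's Mayer--Vietoris route, which sidesteps the perfectness issue entirely by invoking Farber's lemma only on the ambient torus, where $\tau:\theta_i\mapsto\pi-\theta_i$ \emph{is} $s_2$-invariant.
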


Thus, we reduce a topological problem to a combinatorial problem
%% changed "we will reduce" to we reduce" 
of determining $a_j$ and $b_j$ from $(\ell_i)$. 

For the second case, 
let $\gamma: [0, 1]\to \mathbb R^2$ be an embedding of an interval into the plane 
such that 
\begin{equation}
	|\gamma(0)| = |\gamma(1)| = \sum_{i = 1}^k \ell_i
\end{equation}
and that for each $J \subseteq \{1, \ldots, k \}$, 
$\gamma$ only intersects transversely with the circles
centered at the origin with radius $|r_J|$, where 
\begin{equation}
	r_J = \sum_{i \in J} \ell_i - \sum_{i\notin J} \ell_i 
\end{equation}
which we call ``circles of a critical radius." 
We define $A_\gamma$ to be the motion space of the robotic arm constricted to $\gamma$. 
%% changed \Gamma to A_\gamma in all places, this was one
Then for each subset $J \subseteq \{1, \ldots, k \}$, 
we introduce the concept of the multiplier $\mu_J$, 
defined to be half the number of times the $\gamma$ 
intersects the circle of radius $|r_J|$. 
As such, we redefine 
\begin{equation}
	a_j = \sum \mu_J
\end{equation}
over all $J \subseteq \{1, \ldots, k\}$ where $|J| = j$ 
and $r_J < 0$; 
furthermore, 
\begin{equation}
	b_j = \sum \mu_J
\end{equation}
over all $J \subseteq \{1, \ldots, k \}$ where $|J| = j$
and $r_J > 0$. 
Observe, even if there is a $J$ with $r_J = 0$, 
no intersection with the origin can be transverse. 
Now, we state the second result of this paper. 
%% changed "we can state" to "we state"

%% Theorem 2
\begin{theorem}\label{bigtheorem2}
	$H_j(A_\gamma; \mathbb Z)$ is free abelian with rank $a_j + b_{j + 1}$.
\end{theorem}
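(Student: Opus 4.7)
The strategy is to mirror the Morse-theoretic approach of Theorem \ref{bigtheorem1}, using the parameter along $\gamma$ as a Morse function and replacing the single-crossing count by the multipliers $\mu_J$. Let $F:(S^1)^k \to \mathbb{R}^2$ denote the hand-position map, so that $A_\gamma = \{(\theta, t) \in (S^1)^k \times [0,1] : F(\theta) = \gamma(t)\}$. The first task is to confirm that $A_\gamma$ is a closed smooth $(k-1)$-manifold; the transversality hypothesis forces the differential of the constraint map $(\theta, t) \mapsto F(\theta) - \gamma(t)$ to have constant rank $2$, so the implicit function theorem provides the desired smooth structure.

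Define $\pi: A_\gamma \to [0,1]$ by $\pi(\theta, t) = t$. A Lagrange-multiplier calculation identifies the critical points: $(\theta, t)$ is critical iff $\gamma'(t) \notin \operatorname{im}(d_\theta F)$, which happens precisely when the arm is collinear along the ray through $\gamma(t)$ and $\gamma'(t)$ has nonzero radial component. Critical points therefore correspond to pairs consisting of an intersection of $\gamma$ with a critical circle of radius $|r_J|$ together with a subset $J$ realizing that radius. Since $|\gamma(0)|=|\gamma(1)|=L$ and $\gamma$ lies inside the outer disk between these endpoints, the number of intersections of $\gamma$ with each inner critical circle is the even number $2\mu_J$, split evenly into $\mu_J$ outward and $\mu_J$ inward crossings.

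The main technical step is computing the Morse index at each critical point. Expanding the constraint in local coordinates $\psi_i$ (deviations from the aligned angles) and using the linearized equation $F_y = \gamma_y$ to eliminate one coordinate, the Hessian of $\pi$ becomes, up to a global sign controlled by the radial component $u = \gamma'(t) \cdot \hat r$, the restriction of the diagonal form $\sum_i \epsilon_i \ell_i \dot\psi_i^2$ to the hyperplane $\sum_i \epsilon_i \ell_i \dot\psi_i = 0$, where $\epsilon_i = \pm 1$ encodes membership in $J$. A direct signature computation then shows that outward crossings ($u > 0$) at subsets $J$ with $r_J > 0$ produce Morse index $|J|-1$, and inward crossings produce index $k-|J|$. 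Summing these contributions, and using the involution $J \leftrightarrow J^c$ (which yields $a_j = b_{k-j}$) to translate inward counts, the total number of index-$j$ critical points of $\pi$ is exactly $a_j + b_{j+1}$.

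The final step is to upgrade the Morse count to the integral homology of $A_\gamma$, and this is where I expect the main obstacle. The Morse inequalities give $\operatorname{rk} H_j(A_\gamma;\mathbb{Z}) \leq a_j + b_{j+1}$, but obtaining equality together with freeness requires the associated Morse complex to have trivial differential and no torsion. I would mirror the argument used to close Theorem \ref{bigtheorem1}: combine the Poincar\'e duality symmetry on the closed orientable $(k-1)$-manifold $A_\gamma$ (which is consistent with the identity $a_j + b_{j+1} = a_{k-1-j} + b_{k-j}$ forced by $a_j = b_{k-j}$) with an index-bisection argument in the spirit of Farber \cite{farber_polygon}, pairing critical points in complementary dimensions so that no Morse cancellations are possible.
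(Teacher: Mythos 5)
Your opening three steps track the paper's proof closely: you use the same Morse function (the parameter along $\gamma$, which the paper writes as $f = \gamma^{-1}\circ s$), the same identification of critical points with collinear configurations, and the same index count ($|J|-1$ for outward crossings, $k-|J|$ for inward ones, assembled via $a_j = b_{k-j}$ into $a_j + b_{j+1}$ critical points of index $j$). Up to that point you are reconstructing the paper's argument.

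The final paragraph, however, is where you have correctly located the hard part and then proposed the wrong tool to finish. The paper does \emph{not} close the argument with Poincar\'e duality or ``index-bisection''; it constructs an explicit smooth involution $\tau$ on $A_\gamma$ and then invokes \underline{Lemma~\ref{farber4}}. The involution is defined locally: for a configuration whose hand sits at $x = \gamma(t)$, let $\theta$ be the angle of the ray from the origin through $x$, and set
\begin{equation*}
	\tau(\theta_1,\ldots,\theta_k) = (2\theta - \theta_1,\ldots, 2\theta - \theta_k),
\end{equation*}
i.e.\ reflection of the arm across that ray. This preserves the hand position (so $f$ is $\tau$-invariant), and its fixed points are exactly the collinear configurations, which are the critical points of $f$. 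Those three facts are precisely the hypotheses of \underline{Lemma~\ref{farber4}}, which then yields freeness of $H_j(A_\gamma)$ with rank equal to the number of index-$j$ critical points, with no Morse inequalities left to sharpen.

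Your proposed replacement does not do the same work. Poincar\'e duality only gives the symmetry $\beta_j = \beta_{k-1-j}$, which is consistent with the answer but does not pin down the ranks or rule out torsion. And ``pairing critical points in complementary dimensions'' does not prevent Morse cancellations: cancellations occur between critical points of \emph{adjacent} index along gradient trajectories, not complementary index. What Farber's involution actually does is pair the gradient flow lines themselves (each trajectory with its $\tau$-reflected partner, carrying opposite sign), which is why the Morse differential vanishes. That mechanism is the missing idea. (Your description of how Theorem~\ref{bigtheorem1} is ``closed'' is also off: the paper uses a Mayer--Vietoris sequence for the sub- and supralevel sets of $s_2$, not a duality argument.) To repair your proof you would need to supply the reflection involution above and verify the hypotheses of \underline{Lemma~\ref{farber4}}; once that is in place the rest of your outline goes through.
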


\begin{figure}[h]
	\centering
	\includegraphics[width=0.3\textwidth]{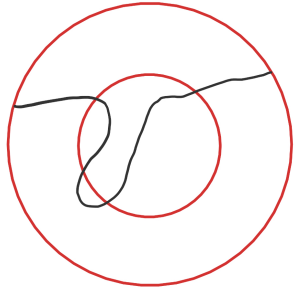}
	\caption{A smooth curve which satisfies the conditions of the second case. The circles of critical radii are in red.}
	\label{fig:smooth_curve_intro}
\end{figure}

We use the same notation of $a_j$ and $b_j$ to show a connection
between the two types of constrictions. 
Namely, a horizontal line without any of the specified tangential intersections
also fits the second case, 
i.e. all $\mu_J = 1$. 
However, the horizontal line also admits certain types of tangential intersections
with circles of critical radii. 
These tangential intersections produce motion spaces which are not smooth manifolds, 
and thus we cannot use the methods used below to compute the homology
of constrictions of the second type. 
Furthermore, there are types of intersections 
which cannot be produced by straight lines, see \underline{Figure~\ref{fig:missing_tangential_intersection}}. 
We have not found a method to compute curves with such intersections. 
This missing piece, along with the similarities between the two theories, 
suggests that there exists a general theory for all smooth curves. 
We leave this work to future papers. 

\begin{figure}[h]
	\centering
	\includegraphics[width=0.3\textwidth]{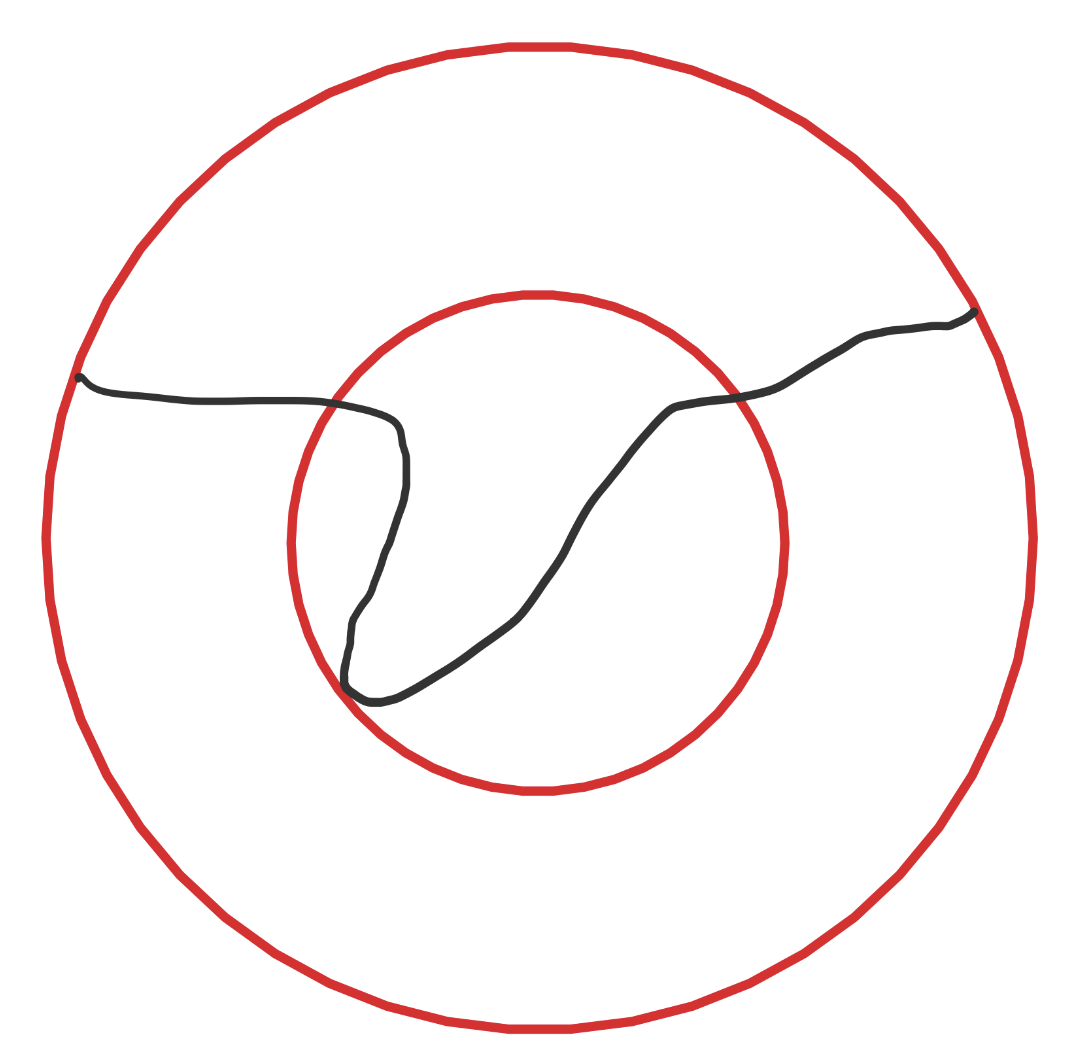}
	\caption{A smooth curve which has a tangential intersection which cannot be replicated with a horizontal line.}
	\label{fig:missing_tangential_intersection}
\end{figure}

%% not quite sure how to simplify this, 
%% do I want to

%%%%%%%%
%% outline
%%%%%%%%

\subsection*{Outline}

% background
This paper assumes the reader has familiarity with algebraic topology. 
%% changed "paper will assume" to "paper assumes" 
We try to make all of the concepts as intuitive as possible, 
but some background knowledge is necessary to grasp
%% changed "will be" to is" 
the crux of this paper. 
We highly recommend \cite{munkres} and \cite{hatcher}. 

% linkages
In \underline{Section~\ref{background}}, 
we give much of the background necessary for the rest of the article. 
We first define linkages 
(see \cite{farber_polygon}, \cite{farber_motion}, 
\cite{kourganoff}, \cite{walker}), 
then look at the specific case of a polygon (see \cite{farber_polygon}). 
Afterwards, we define a robotic arm and a motion space. 
before we introduce homology and give some necessary results (see \cite{milnor}, \cite{pino}, \cite{farber_polygon}). 

% examples milnor
In \underline{Section~\ref{examples}}, 
we give several examples of motion spaces constricted along lines in $\mathbb R^2$. 
We also apply our theorems to these examples. 

% level
In \underline{Section~\ref{levels}}, 
we prove one last technical result before moving into the proofs. 
We rely heavily on Morse theory. 

% proof 1
Finally, in \underline{Section~\ref{sec:proofs}}, we prove \underline{Theorem~\ref{bigtheorem1}}, 
and \underline{Theorem~\ref{bigtheorem2}}.

%%%%%%%%
%% definitions
%%%%%%%%

\section{Background}\label{background}

%%
%% linkages and configuration spaces
%% 

\subsection{Linkages and Configuration Spaces}\label{link}

Much of following notation, definitions, and propositions in this section are adapted from \cite{walker}.
There, the reader will find more simple examples to motivate these concepts. 
%% changed "can find" to "will find"
We rewrite them here for clarity and understandability. 

% graph
\begin{definition}
	A \emph{graph} $G = (V, E)$ is a finite set of vertices $V$ 
	along with a set of edges $E \subset V \times V$. 
	The \emph{degree} of a vertex $\nu_1 \in V(G)$ is the number of edges $(\nu_1, \nu_2) \in E(G)$. 
\end{definition}

For our purposes, we identify edges $(\nu_1, \nu_2) \sim (\nu_2, \nu_1)$, 
and we exclude edges of the form $(\nu_1, \nu_1)$. 
We use $V(G)$ to refer to the vertex set of a graph $G$, and $E(G)$ to refer to its edge set. 
%%%% changed this from 'We'll use' to 'We use' 

%linkage
\begin{definition}
	A \emph{linkage} $L = (G, \ell)$ is a graph $G$ along with a map $\ell: E(G) \rightarrow \mathbb{R}_+$. 
\end{definition}

In effect, this gives a distance to each edge. 
Moving forward, we refer to the vertices of a linkage as $V(L)$, and $E(L)$ for edges. 
%%%% changed "We'll refer" to "We refer" 
Even though the edges have ``distances,'' we have not actually said what space the graph is in. 
In fact, a graph is an abstract concept and is not ``in'' any space. 
As such, a graph is ``realized" in a space.
%% changed "We can ``realize" a graph in a space" 
%% to "As such, a graph is ``realized" in a space." 

%realization
\begin{definition}
	A \emph{realization} of a graph $G$ is a map $p: V(G) \rightarrow \mathbb{R}^n$. 
\end{definition}

This fixes each vertex at a particular coordinate in $\mathbb{R}^n$. 
Then each edge $(\nu_1, \nu_2)$ is the line segment connecting $p(\nu_1)$ to $p(\nu_2)$. 
The length of $(\nu_1, \nu_2)$ is $| p(\nu_1) - p(\nu_2) |$, the Euclidean distance in $\mathbb{R}^n$. 
Note this definition permits the lines to overlap. 
%%%% Changed from "Note that sometimes we'll be forced to overlap the lines in a configuration. That's ok."
We allow for vertices to overlap as well. 

From this definition, we see that $(\mathbb{R}^n)^j$  
%% changed "we can see" to "we see"
is the space of all realizations of a graph $G$ where $j = | V(G) |$. 
However, these realizations do not respect edge lengths. 
Instead, the edge lengths are a function of the realization themselves.

% configuration 
\begin{definition}
	A \emph{configuration} of a linkage $L = (G, \ell)$ with $j$ vertices
	is a realization $p: V(L) \rightarrow \mathbb{R}^n$ 
	such that $|p(\nu_1) - p(\nu_2)| = \ell(\nu_1, \nu_2)$ for all edges $(\nu_1, \nu_2)\in E(L)$. 
\end{definition}

A configuration is one way to arrange the fixed-length edges in $\mathbb{R}^n$ 
%% changed "a configuration is a way that we can arrange" to "A configuration is one way to arrange"
so they form the graph $G$. 
We define a notion of closeness between configuration spaces 
%% changed "We get" to "We define" 
if each of the vertices are close to each other in $\mathbb{R}^n$. 
From this, we develop a space of all possible configurations. 

% free configuration space
\begin{definition}
	The \emph{free configuration space} of a linkage $L$ in $\mathbb{R}^n$ with $j$ vertices is 
	\[ 
		F_n(L) = \{ p\in (\mathbb{R}^n)^j :
			\ell(\nu_1, \nu_2) = |p(\nu_1) - p(\nu_2)|,   \quad \forall (\nu_1, \nu_2)\in E(L) \} 
	\]
	with the subset topology. 
\end{definition}

The free configuration space of a linkage is all realizations of its graph 
such that the line segments produced by the vertices have length equal to their corresponding edges. 
We call $F_n(L)$ free because it does not have any identification between different configurations
or constraints on what configurations are possible. 
%% changed "we can make" to "are possible." 
For example, if we rotated a configuration around the origin, 
the two configurations would look similar but be counted as different points in $F_n(L)$. 
There is a similar result from translation and scaling. 
%% changed "We get" to "There is" 
To address this concern, we have the notion of a configuration space.

% configuration space
\begin{definition}
	The \emph{configuration space} of a linkage $L$ in $\mathbb{R}^n$ is
	\[ 
		C_n(L) = F_n(L) / \mathcal E
	\] 
	where $\mathcal E$ is the group of Euclidean transformations in $\mathbb R^n$.
\end{definition}

This identification amounts to ``nailing down'' one of the edges.
%%%% Changed from "amount" 
Any edge will do, the result is the same for each.  
%% changed "result will be the same" to "result is the same" 
Equivalently, we could ``nail down" two vertices. 
We use $C(L)$ when it's obvious that $n = 2$. 
%%%%% Changed from "We'll use" 
Most of our configuration spaces are manifolds. 
%% changed "spaces will be manifolds" to "spaces are manifolds" 
Indeed, it is know that for any connected compact manifold, 
there exists some linkage such that the connected components of the configuration space 
are diffeomorphic to the manifold \cite{kourganoff}.

%%
%% polygonal linkages
%%

\subsection{Polygonal Linkages}\label{polygon}

One well-studied type of linkage is the polygon. 

%polygon
\begin{definition}
	A \emph{polygon} $\mathcal{P}_k$ is a linkage $L = (G, \ell)$ with $k$ edges
	where $G$ is connected and every vertex has degree 2. 
\end{definition}

In \cite{walker}, this is referred to as a cyclic linkage. 
For a polygonal linkage, we write $V(G) = \{ \nu_0, \nu_1, \ldots, \nu_{k - 1} \}$, 
and $E(G) = \{ (\nu_0, \nu_1), (\nu_1, \nu_2), \ldots, (\nu_{k - 1}, \nu_0) \}$. 
Further, since $E(G)$ is a finite, ordered set, we treat $\ell$ as an vector of positive reals 
$(\ell_1, \ell_2, \ldots, \ell_k)$ 
where $\ell_i = \ell(\nu_{i - 1}, \nu_i)$ for $i = 1, \ldots, k-1$
and $\ell_k = \ell(\nu_{k - 1}, \nu_0)$ 

We give an example here to illustrate what this means, 
but this theory is developed further below as it is crucial to our understanding of robotic arms. 
%%% changed from "this theory will be" to "this theory is"

% triangle
\begin{exmp}\label{ex:triangle}
	Consider $C(\mathcal{P}_3)$, the configuration space of a triangle in $\mathbb{R}^2$. 
	Let $\ell = (\ell_1, \ell_2, \ell_3)$ be the side lengths. 
	From geometry, this triangle is unique up to congruency 
	because all the side lengths are fixed. 
	In our configuration space, we account for translation and rotation, but not reflection. 
	We assume without loss of generality that $\ell_1 \geq \ell_2 \geq \ell_3$. 
	Note we prove why we can assume as such in \underline{Proposition~\ref{prop:scaling}}
	and \underline{Proposition~\ref{prop:reordering}}.
	%% changed from "We'll prove why later, but we can assume 
	%% without loss of generality that ... "
	%% to "Note we prove why we can assume as such ..." 
	Thus we have 3 cases. 
	%% changed from "Then" to "Thus
	The first is $\ell_1 > \ell_2 + \ell_3$. 
	Here $C(\mathcal{P}_3) \cong \emptyset$ because of the triangle inequality. 
	
	Second, we have $\ell_1 = \ell_2 + \ell_3$. 
	The only possible configuration is a line, so $C_2(\mathcal{P}_3) \cong \mathbb{D}^0$, a single point. 
	%% changed "then we can only form" to "the only possible configuration is a line"
	
	Third, we have $\ell_1 < \ell_2 + \ell_3$. 
	Here is where our polygon looks like a triangle. 
	We fix $\ell_1$ so that it lays on the $x$-axis and $p(\nu_0)$ is on the origin. 
	%% changed "Let's fix" to "We fix" 
	Then $p(\nu_2)$ is either above or below the $x$-axis, which gives us our only two configurations. 
	This is because configuration spaces take into account rotation and translation but not reflection. 
	Thus in this case, $C_2(\mathcal{P}_3) \cong \mathbb{S}^0$, or two disjoint points. 
\end{exmp}

\begin{figure}[h]
	\centering
	\includegraphics[width=\textwidth]{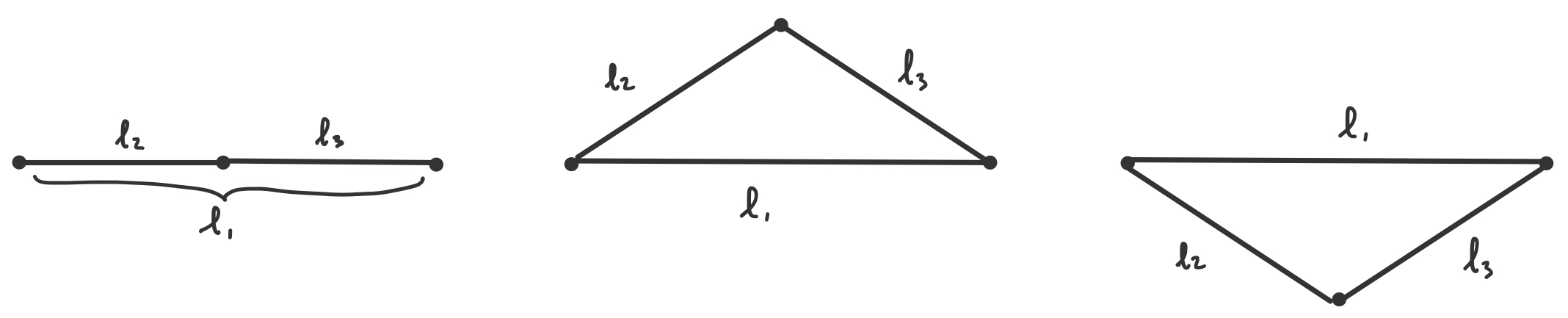}
	\caption{Example Configurations of a Triangle} 
	\label{fig:c2p3}
\end{figure}

Note, the ``flat'' triangle is an example of a collinear configuration. 
We see below these correspond to critical points. 
%% Changed from "They will be important later because they correspond to critical points."
This is a well known result (see \cite{walker} Proposition 3.3). 
For polygonal linkages, the configuration space is a smooth manifold if the edges lengths admit
no collinear configurations (see \cite{farber_polygon}).

%%
%% robotic arms
%%

\subsection{Robotic Arms and Their Motion Space}\label{robots_motion}

% robot arm
\begin{definition}
	A \emph{robotic arm} $\mathcal{L}_k$ is a linkage $L = (G, \ell)$ with $k$ edges 
	where $G$ is connected and exactly two vertices have degree 1 while the rest have degree 2. 
\end{definition}

In \cite{walker}, this is referred to as a \emph{chain}. 
We call it a robotic arm because the vertices and edges are analogous to hinges and rods.. 
%% changed "we can think of the vertices as hinges and the edges as rods"
%% to "the vertices and edges are analogous to hinges and rods"
This alludes to its applications in robotics and motion planning. 
Similar to the polygon, we label the vertices $V(G) = \{ \nu_0, \nu_1, \ldots, \nu_k \}$ 
%% changed "we can label" to "we label" 
so that we have edges $E(G) = \{ (\nu_0, \nu_1), (\nu_1, \nu_2), \ldots, (\nu_{k - 1}, \nu_k) \}$
and a length vector $\ell = (\ell_1, \ldots, \ell_k)$ 
where $\ell_i = \ell(\nu_{i - 1}, \nu_i)$. 

\begin{figure}[h]
	\centering
	\includegraphics[width=0.3\textwidth]{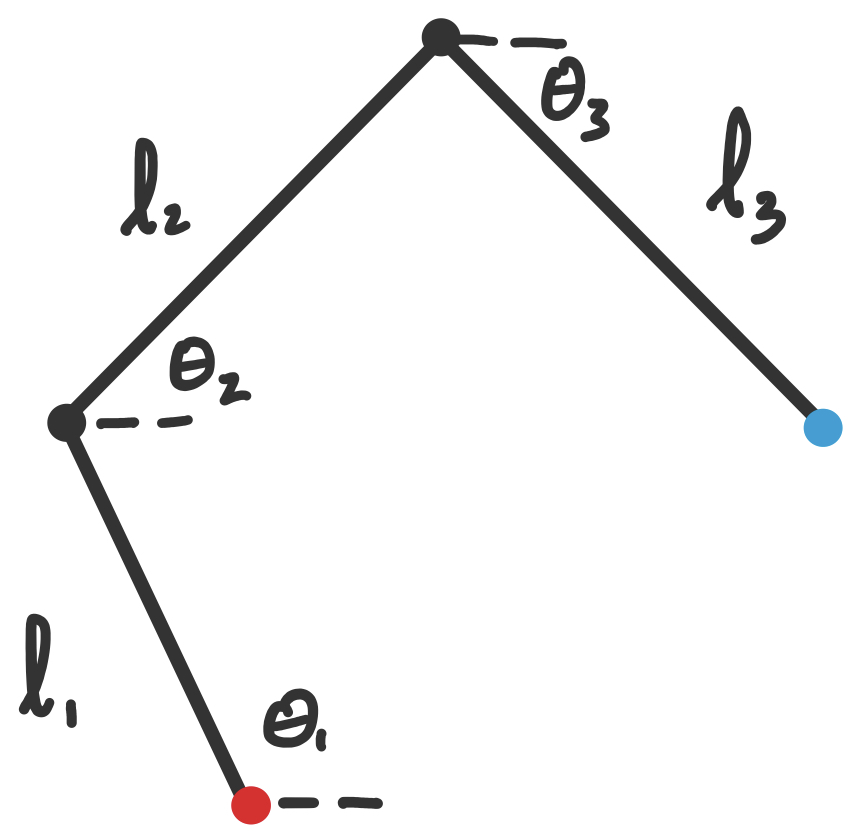}
	\caption{Robot Arm.  Note, the red dot is $\nu_0$ and the blue dot is $\nu_k$.}
	\label{fig:RobotArm}
\end{figure}

Unlike the configuration space defined above, 
we do not want to look at the configurations with one edge ``nailed down"
as that removes a degree of mobility. 
We want all of the edges to be able to freely rotate, 
like an arm attached to a shoulder. 
This means we do not quotient by rotation. 
However, we do still want to quotient by translation. 
This amounts to attaching one end vertex, say $\nu_0$, to the origin. 

% motion space
\begin{definition}\label{def:motion}
	The \emph{motion space} of a robotic arm $\mathcal{L}_k$ in $\mathbb{R}^n$ is 
	\[ 
		M_n(\mathcal{L}_k) = \{ p\in (\mathbb{R}^n)^k : p(\nu_0) = \textbf{0} \}. 
	\] 
\end{definition}

Note, this means $M_n(\mathcal{L}_k) \times \mathbb{R}^n \cong F_n(\mathcal{L}_k)$ (see \cite{walker}).
In this paper, we only consider the case where $n = 2$, 
%% changed from "we will mainly consider" to "we only consider" 
so we write $M(\mathcal{L}_k)$. 
Let $\theta_i \in \mathbb{S}^1$ be the angle between edge $(\nu_{i-1}, \nu_1)$ and the $x$-axis. 
Without any constraints on the angles, each rotates freely. 
Thus, the unconstrained motion space of a robotic arm is 
homeomorphic to the $k$-torus 
\begin{equation}\label{eq:torus}
	M(\mathcal{L}_k) \cong \mathbb{T}^k =  \prod^k \mathbb{S}^1,
\end{equation}
the product space of $k$ circles. 

Our ultimate goal is to look at the topology of the motion space of a robotic arm
whose end point moves along any smooth curve. 
In future work, our results will be extended to an arbitrary smooth curve. 
In this paper, we focus on lines in $\mathbb R^2$ and smooth curves with some conditions. 

% constrained motion space
\begin{definition}\label{def:constrained}
	The \emph{constrained motion space}  
	of a robotic arm $\mathcal{L}_k$ within the subspace $D \subseteq \mathbb R^2$ is
	\[ 
		M(\mathcal{L}_k, D) = \{ p\in M(\mathcal{L}_k) : p(\nu_k) \in D \}. 
	\] 
\end{definition}

The idea for this constrained motion space comes from \cite{kourganoff}. 
Note that if $D = \mathbb R^2$, we have $M(\mathcal{L}_k, D) = M(\mathcal{L}_k)$. 
Below are some examples to illustrate this example more fully. 
%% changed "we'll give a couple" to "below are some" 

%% motion space at point
\begin{exmp}\label{ex:add_edge}
	Let $z = (x, y) \in \mathbb R^2$ where $z \neq (0, 0)$. 
	Consider $M(\mathcal{L}_k, z)$, the constrained motion space of a robotic arm $\mathcal{L}_k$ 
	with edge lengths $\ell = (\ell_1, \ldots, \ell_k)$ and whose last vertex is fixed at $z$. 
	If we add the edge $(\nu_0, \nu_k)$ with nonzero length $d = \sqrt{x^2 + y^2}$ to our linkage, 
	this produces a polygon $\mathcal{P}_{k + 1}$
	%% changed from "then we get a polygon" to "this produces a polygon" 
	with edge lengths $\ell^\prime = (\ell_1, \ldots, \ell_k, d)$. 
	By fixing $p(\nu_0)$ at the origin and $p(\nu_k)$ at $z$, 
	we have essentially ``nailed down" $(\nu_0, \nu_k)$. 
	Thus, 
	\begin{equation}\label{eq:add_edge}
		M(\mathcal{L}_k, z) = C(\mathcal{P}_{k + 1}).
	\end{equation}
\end{exmp}

%% motion space at origin
\begin{exmp}\label{ExCircleAction}
	Let $z = (0, 0)$. 
	Consider again $M(\mathcal{L}_k, z)$. 
	Here, we cannot add an edge $d = (\nu_0, \nu_k)$ because it would have length 0. 
	However, because $p(\nu_0) = p(\nu_k)$, our robotic arm $\mathcal{L}_k$ is the polygon $\mathcal{P}_k$. 
	In fact, most definitions of a polygon come from a robotic arm whose end vertices are fixed to the same point. 
	It must be noted that the motion space of $\mathcal{L}_k$ 
	is not the same as the configuration space of $\mathcal{P}_k$.
	%% changed "will not be the same as" to "is not the same as"
	This is because we want to allow every edge to rotate around the origin in the motion space, 
	but we nail down one of the edges in the configuration space. 
	%% changed "edges is the configuration space" to "edges in the configuration space"
	Nonetheless, if we think of the first edge $(\nu_0, \nu_1)$ as the one nailed down, 
	%% changed "No fear! This is a simple fix. If we think ..." to "Nonetheless, if we think ..." 
	we induce the motion space of the robotic arm from the configuration space of the polygon by
	%% changed "we can get from the configuration space of the polygon to the 
	%%% motion space of the robotic arm by " to "we induce the motion space of the robotic arm from
	%%% the configuration space of the polygon by
	rotating this edge around the origin. 
	Thus, 
	\begin{equation}
		M(\mathcal{L}_k, z) = C(\mathcal{P}_{k}) \times \mathbb{S}^1. 
	\end{equation}
\end{exmp}

%% motion space on cirlce with radius
\begin{exmp}
	Consider the circle $\mathcal{C}_d = \{ (x, y) \in \mathbb R^2 : \sqrt{x^2 + y^2} = d \}$ where $0 < d < 1$. 
	We know the motion space at each point $z \in \mathcal{C}_d$ 
	from \underline{Equation~\ref{eq:add_edge}}, using the same construction of $\mathcal{P}_{k + 1}$. 
	We know $\mathcal{C}_d \cong \mathbb{S}^1$.
	Since the configuration of $M(\mathcal{L}_k, z)$ moves independently from $z$, 
	%% changed "from z we get" to "from z, 
	\begin{equation}
		M(\mathcal{L}_k, \mathcal{C}_d) = C(\mathcal{P}_{k + 1}) \times \mathbb{S}^1. 
	\end{equation}
	A more formal proof exists using sub-linkages and fibered products, 
	%% Changed "We can show this result more formally using sub-linkages and fibered products,"
	%%% to "A more formal proof exists using sub-linkages and fibered products" 
	the theory of which is developed in \cite{walker}. 
	
\end{exmp}

Now that we understand the relationship between a configuration space and a motion space, 
we prove two propositions. 
The first shows that scaling a linkage does not change its motion space.

% scaling
\begin{proposition}\label{prop:scaling}
	Suppose $\mathcal L_k$ is a robotic arm with edge lengths $\ell = (\ell_1, \ldots, \ell_k)$. 
	Let $\alpha \in \mathbb{R}_+$. 
	We construct $\mathcal L_k^\prime$ to be another robotic arm with edge lengths $\ell^\prime= (\alpha \ell_1, \ldots, \alpha \ell_k)$.
	Then $M(\mathcal L_k) \cong M(\mathcal L_k^\prime)$. 
\end{proposition}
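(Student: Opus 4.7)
The plan is to exhibit an explicit homeomorphism between the two motion spaces given by pointwise scaling in $\mathbb{R}^2$. Intuitively, scaling every edge length by $\alpha$ is the same as scaling every configuration by $\alpha$, which rescales positions but preserves all angles, and the motion space is essentially the product of the angles at each joint.

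Concretely, I would define
\[
	\phi : M(\mathcal{L}_k) \longrightarrow M(\mathcal{L}_k^\prime),
	\qquad \phi(p)(\nu_i) = \alpha \cdot p(\nu_i)
\]
for each $i = 0, 1, \ldots, k$. The first step is to verify that $\phi(p)$ really lands in $M(\mathcal{L}_k^\prime)$: the basepoint condition is immediate since $\phi(p)(\nu_0) = \alpha \cdot \mathbf{0} = \mathbf{0}$, and for any edge $(\nu_{i-1}, \nu_i)$ one has
\[
	|\phi(p)(\nu_{i-1}) - \phi(p)(\nu_i)| = \alpha \,|p(\nu_{i-1}) - p(\nu_i)| = \alpha \ell_i = \ell_i^\prime,
\]
so $\phi(p)$ satisfies the edge-length constraint defining $M(\mathcal{L}_k^\prime)$.

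Next, I would observe that $\phi$ is continuous because it is the restriction of the linear map $(\mathbb{R}^2)^{k+1} \to (\mathbb{R}^2)^{k+1}$ given by multiplication by $\alpha$. The same construction with $1/\alpha$ in place of $\alpha$ gives a map $\psi : M(\mathcal{L}_k^\prime) \to M(\mathcal{L}_k)$ which, by the identical verification, is well-defined and continuous. Finally, $\psi \circ \phi$ and $\phi \circ \psi$ are the identity on each coordinate, so $\phi$ is a homeomorphism.

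There is no real obstacle here — the result is essentially bookkeeping, and the only thing to be careful about is confirming that both the basepoint condition $p(\nu_0) = \mathbf{0}$ and the edge-length conditions are preserved under the scaling. Once both are checked, continuity and invertibility follow from the fact that scalar multiplication on $(\mathbb{R}^2)^{k+1}$ is a linear homeomorphism when $\alpha > 0$.
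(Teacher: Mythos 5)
Your proof is correct and takes essentially the same approach as the paper: both exhibit scaling by $\alpha$ as an explicit homeomorphism. The only cosmetic difference is that you work directly in position coordinates on $(\mathbb{R}^2)^{k+1}$, checking the basepoint and edge-length constraints and invoking that multiplication by $\alpha > 0$ is a linear homeomorphism, while the paper phrases the same map in angle/edge-vector coordinates via the identification $M(\mathcal{L}_k) \cong \mathbb{T}^k$.
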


\begin{proof}
	We show the induced map $\alpha_*: M(\mathcal L_k)\to M(\mathcal L_k^\prime)$ is a homeomorphism. 
	%% Changed "we will show" to "we show" 
	For a configuration $(\theta_1, \ldots, \theta_k)\in M(\mathcal L_k)$, 
	let $\textbf u = (u_1, \ldots, u_k)$ be the vector of complex numbers $u_i = \ell_i \theta_i$
	where we treat $\theta_i \in \mathbb S^1 \subseteq \mathbb C$. 
	We define the induced map such that $\alpha(\textbf u) = (\alpha u_1, \ldots, \alpha u_k)$. 
	Then $\alpha u_i = \alpha (\ell_i \theta_i) = (\alpha \ell_i) \theta_i$ clearly preserves the angle. 
	Thus, $M(\mathcal L_k) \leftrightarrow \mathbb T^k \leftrightarrow M(\mathcal L_k^\prime)$ is a bijection.
	%% changed "clearly preserves the angle, giving us a bijection" to "clearly preserves the angle. 
	%%% Thus, ... is a bijection." 
	Let $\textbf u^\prime$ be another configuration of $M(\mathcal L_k)$. 
	Then $\alpha(u_i - u_i^\prime) = (\alpha \ell_i)(\theta_i - \theta_i^\prime)$ preserves the angle difference. 
	Therefore, open neighborhoods are mapped to each other, and vice versa. 
\end{proof}

This allows us to assume 
\begin{equation}\label{eq:sum21}
	\sum_{i = 1}^k \ell_i = 1
\end{equation}
for any robotic arm. 
The next proposition shows that reordering of the edge lengths also does not matter.

%% reordering
\begin{proposition}\label{prop:reordering}
	Suppose $\mathcal{L}_k$ is a robotic arm with edge lengths $(\ell_1, \ldots, \ell_k)$. 
	Let $K = \{1, 2, \ldots, k \}$, 
	and let $\sigma: K \rightarrow K$ be a permutation. 
	If we construct $\mathcal{L}_{k}^\prime$ to be a robotic arm with edge lengths $(\ell_1^\prime, \ldots, \ell_k^\prime)$
	where $\ell_i^\prime = \ell_{\sigma(i)}$, 
	then for any given $D \subseteq \mathbb R^2$, 
	we have $M(\mathcal{L}_k, D) \cong M(\mathcal{L}_k^\prime, D)$. 
\end{proposition}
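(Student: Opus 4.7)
The plan is to exhibit an explicit homeomorphism $\Phi: M(\mathcal{L}_k) \to M(\mathcal{L}_k')$ that permutes the angle coordinates according to $\sigma$, and then show that $\Phi$ carries the constraint $p(\nu_k) \in D$ to itself by a reindexing of the position sum. Since the constraint only restricts the location of the \emph{final} vertex (intermediate vertices are allowed to move freely), the key geometric observation is that summing the edge vectors in a different order yields the same endpoint.

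First, I would use the identification $M(\mathcal{L}_k) \cong \mathbb{T}^k$ from Equation~\eqref{eq:torus}, so that a configuration is specified by an angle tuple $(\theta_1, \ldots, \theta_k)$, with the position of $\nu_j$ given by $p(\nu_j) = \sum_{i=1}^{j} \ell_i (\cos\theta_i, \sin\theta_i)$. Define $\Phi: M(\mathcal{L}_k) \to M(\mathcal{L}_k')$ by
\begin{equation}
    \Phi(\theta_1, \ldots, \theta_k) = (\theta_{\sigma(1)}, \ldots, \theta_{\sigma(k)}).
\end{equation}
Since $\sigma$ is a bijection of coordinate indices, $\Phi$ is a coordinate permutation of the product torus and is therefore a homeomorphism with inverse induced by $\sigma^{-1}$.

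Next I would verify that $\Phi$ preserves the terminal vertex. For a configuration with angles $(\theta_i)$ in $\mathcal{L}_k$, the last vertex sits at $p(\nu_k) = \sum_{i=1}^k \ell_i(\cos\theta_i, \sin\theta_i)$. The image configuration $\Phi(\theta_1, \ldots, \theta_k)$ in $\mathcal{L}_k'$ has last vertex
\begin{equation}
    p'(\nu_k) = \sum_{i=1}^k \ell_i' (\cos\theta_{\sigma(i)}, \sin\theta_{\sigma(i)}) = \sum_{i=1}^k \ell_{\sigma(i)}(\cos\theta_{\sigma(i)}, \sin\theta_{\sigma(i)}),
\end{equation}
and re-indexing the sum by $j = \sigma(i)$ gives exactly $p(\nu_k)$. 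Hence $p(\nu_k) \in D$ if and only if $p'(\nu_k) \in D$, so $\Phi$ restricts to a bijection between $M(\mathcal{L}_k, D)$ and $M(\mathcal{L}_k', D)$, which is again a homeomorphism in the subspace topology.

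The proof is essentially routine because the constraint is blind to the order in which the edges are traversed, so there is no serious obstacle; the only minor subtlety is keeping track of whether one permutes by $\sigma$ or $\sigma^{-1}$ when defining $\Phi$, which amounts to a convention choice. No argument about intermediate vertices is required, since they are unconstrained in Definition~\ref{def:constrained}.
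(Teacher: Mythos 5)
Your proposal is correct and takes essentially the same approach as the paper: both define the permutation map $(\theta_1,\ldots,\theta_k)\mapsto(\theta_{\sigma(1)},\ldots,\theta_{\sigma(k)})$ on the torus coordinates, observe it is a homeomorphism, and then use commutativity of vector addition (your reindexing $j=\sigma(i)$) to show the endpoint $p(\nu_k)$ is preserved, so the constrained subspaces correspond.
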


An ``informal argument" is found in \cite{walker}.
%% changed "can be found" to "is found"
We prove it here. 

\begin{proof}
	We define an induced map $\sigma_*: M(\mathcal{L}_k, D) \rightarrow M(\mathcal{L}_k^\prime)$
	%% changed "we'll define" to "we define" 
	and show that this is a self-homeomorphism. 
	%% changed "will show" to "show" 
	
	For a configuration $x\in M(\mathcal{L}_k)$, 
	let $(\theta_1, \ldots, \theta_k) \in \mathbb{T}^k$ be the angles of each edge 
	with respect to the positive x-axis. 
	Then we define $\sigma_*(\theta_1, \ldots, \theta_k) = (\theta_{\sigma(1)}, \ldots, \theta_{\sigma(k)})$. 
	The permutation $\sigma$ is a bijection, which implies $\sigma_*$ is a bijection. 
	%% changed "Because the permutation \sigma is a bijection, we get that \sigma_* is a bijection for free"
	%%% to "The permutation \sigma is a bijection, which implies \sigma_* is a bijection" 
	
	Let $\varepsilon > 0$. 
	We define
	\begin{gather*} 
		U = \{ (\zeta_1, \ldots, \zeta_k) \in \mathbb{T}^k 
			: | \theta_i - \zeta_i | < \varepsilon, \forall i \} \\
		V = \{ (\xi_1, \ldots, \xi_k) \in \mathbb{T}^k 
			: | \theta_{\sigma(i)} - \xi_i | < \varepsilon, \forall i \}. 
	\end{gather*}
	So $\sigma_*(U) = V$ and ${\sigma_*}^{-1}(V) = U$. 
	We note that sets of this form constitute a basis of $\mathbb{T}_k$. 
	Thus, $\sigma_*$ is a homeomorphism $M(\mathcal{L}_k) \cong M(\mathcal{L}_k^\prime)$. 
	
	Lastly, if we treat the edges as vectors in $\mathbb{R}^2$, 
	we see $p(\sigma_*(\nu_k)) = p(\nu_k)$ because vector addition is commutative. 
	Thus $\sigma_*(M(\mathcal{L}_k, D)) = M(\mathcal{L}_k^\prime, D)$. 
\end{proof}

As such, we may assume $\ell_1 \geq \ell_2 \geq \ldots \geq \ell_k$ for all robotic arms $\mathcal{L}_k$
%% changed "can" to "may"
and all polygonal linkages $\mathcal{P}_k$.

%%
%% homology
%%

\subsection{Homology}\label{homology}

We now develop the tools necessary compute the homology groups of the constrained motion space. 
We are interested in the Betti numbers $\beta_n$, 
%% changed "We will be interested" to "We are interested" 
which are equal to the rank of the $n^\text{th}$ integral homology group. 
When there is no torsion (which is the case for all of our motion spaces), 
%% changed "which will be the case" to "which is the case"
we understand $\beta_n$ as the number of $n$-dimensional holes in our space. 
Note, $\beta_0$ is the number of connected components.  
For a deeper understanding, see \cite{hatcher}. 
Much of the following notation is adapted from \cite{kourganoff}. 

Because the central proofs of this paper rely on Morse theory, 
we want to have a sense of the critical points of a motion space. 
Recall that a point of a smooth function $f:N\to M$ is critical 
	if at that point the differential has rank less than $\text{dim}M$. 
Suppose we have linkage $\mathcal{L}_k$ with edges $\ell = (\ell_1, \ldots, \ell_k)$ in the plane. 
Let $\theta_i$ denote the angle between each edge and the positive x-axis, 
as in \underline{Figure~\ref{fig:RobotArm}}. 
Then we define 
\begin{equation}\label{eq:last_position}
	s = (s_1, s_2) = \bigg(\sum \ell_i \cos\theta_i, \sum \ell_i \sin\theta_i\bigg) = p(\nu_k) \in \mathbb R^2 
\end{equation}
to be the position of the last vertex of $\mathcal{L}_k$. 
As such, the differential of $s$ is written as the matrix 
\begin{equation}
	Ds = 
	\begin{bmatrix}
		-\ell_1 \sin\theta_1 & -\ell_2 \sin\theta_2 & \cdots & -\ell_k \sin\theta_k \\
		\ell_1 \cos\theta_1 & \ell_2 \cos\theta_2 & \cdots & \ell_k \cos\theta_k
	\end{bmatrix}
\end{equation}
We know that the critical points of $s$ are the configurations $(\theta_1, \ldots, \theta_k)$ 
such that $\text{rk}Ds < 2$. 
This means that all columns are linearly dependent. 
In such a case, for any $i = 2, \ldots, k$ we construct the matrix 
%% changed "we can construct the matrix" to "we construct the matrix" 
\begin{equation}
	D_i = 
	\begin{bmatrix}
		-\ell_1 \sin\theta_1 & -\ell_i \sin\theta_i \\
		\ell_1 \cos\theta_1 & \ell_i \cos\theta_i
	\end{bmatrix} 
\end{equation}
for which $\det(D_i) = 0$. 
Then by working with the determinant we see
\begin{align*}
	0 & = (-\ell_1 \sin\theta_1)(\ell_i \cos\theta_i) - (-\ell_i \sin\theta_i)(\ell_1 \cos\theta_1)   \\
		& = (\ell_1 \ell_i)(\sin\theta_i \cos\theta_1 - \sin\theta_1 \cos\theta_i)  \\
		& = (\ell_1 \ell_i) \sin(\theta_i - \theta_1).
\end{align*}
We know $\ell_1 \ell_i \neq 0$. 
This means $\theta_i = \theta_1$ or $\theta_i = \theta_1 + \pi$ for all $i = 2, \ldots, k$. 
This conclusion motivates the following definition. 
 
 %% collinear
 \begin{definition} 
	If $\theta = (\theta_1, \ldots, \theta_k) \in M(\mathcal{L}_k)$ is a configuration of a robotic arm 
	where $\theta_i = \theta_1$ or $\theta_i = \theta_1 + \pi$ for all $i = 2, \ldots, k$, 
	we call $\theta$ a \emph{collinear} configuration 
\end{definition} 

We call this configuration collinear because all of the edges lay along a single line in $\mathbb{R}^2$. 
From our calculations above, the critical points of $s$ are precisely the collinear configurations. 
%% changed "we can see the critical points of s ..." to "the critical points of s ..." 
However, the codomain of $s$ is $\mathbb{R}^2$. 
When our codomain is $\mathbb{R}$, Morse theory applies, the basics of which we state here. 
%% changed "we can apply Morse theory" to "Morse theory applies" 
We adopt our notation from \cite{pino}. 

We recall the following about Morse theory.
%% changed from "Let's recall the following ..." to "We recall the following ..."  
Suppose $f:\mathbb R^n \to \mathbb R$ is a smooth function.
First, the critical point of a $f$ is of Morse type if it is non-degenerate.
Second, if every critical point of $f$ is of Morse type, then $f$ is of Morse type.
Third, the index of a critical point of Morse type is the integer $m$ such that 
with a change of coordinates, the smooth function is locally homeomorphic to the 
polynomial 
\begin{equation}
	\sum_{i = 1}^m -x_i^2 + \sum_{i = m + 1}^n x_i^2
\end{equation}
We show this with the simplest example, the height function on the circle, $\mathbb S^1$.

\begin{exmp}\label{ex:circle_levels}
	Consider $\mathbb S^1 \subset \mathbb R^2$, embedded as the unit circle.
	Let $f:\mathbb S^1 \to \mathbb R$ be a function which projects points on the unit circle
	to the y-axis, so that $f(x, y) = y$ for all $(x, y) \in \mathbb S^1$. 
	If $y = 1$, the pre-image $f^{-1}(y)$ contains only the point $(0, 1)$. 
	%% changed from "we see the pre-image" to "the pre-image" 
	Similarly, $f^{-1}(-1)$ contains only the point $(0, -1)$. 
	Otherwise, when $|y| < 1$, the pre-image $f^{-1}(y)$ contains exactly two points $(\pm x, y)$. 
	%% changed "we see f^-1" to "the pre-image f^-1" 
	We visualize this in \underline{Figure~\ref{fig:circle_levels}}. 
	Here, the points $(0, 1)$ and $(0, -1)$ are critical, 
	so the critical values are $1$ and $-1$. 
	Note, near $(0, 1)$ the circle looks similar to the graph of the function $y = -x^2$. 
	Thus, its index is 1. 
	Near $(0, -1)$, the circle looks similar to the graph of $y = x^2$, 
	so its index is 0. 
\end{exmp}

We know for a circle, $\beta_0 = 1$ and $\beta_1 =1$. 
Notice how the Betti numbers match up with the number
of critical points of the same index. 
This is no accident. 
The power of Morse theory is to compute 
the homology of a space by looking at the critical points. 
This particular example is important to the motion space of 
%% changed "example will be important when we look at the motion space of" 
%%% "example is important to the motion space of" 
robotic arms with 2 or 3 edges. 

\begin{figure}[t]
	\centering
	\includegraphics[width=0.4\textwidth]{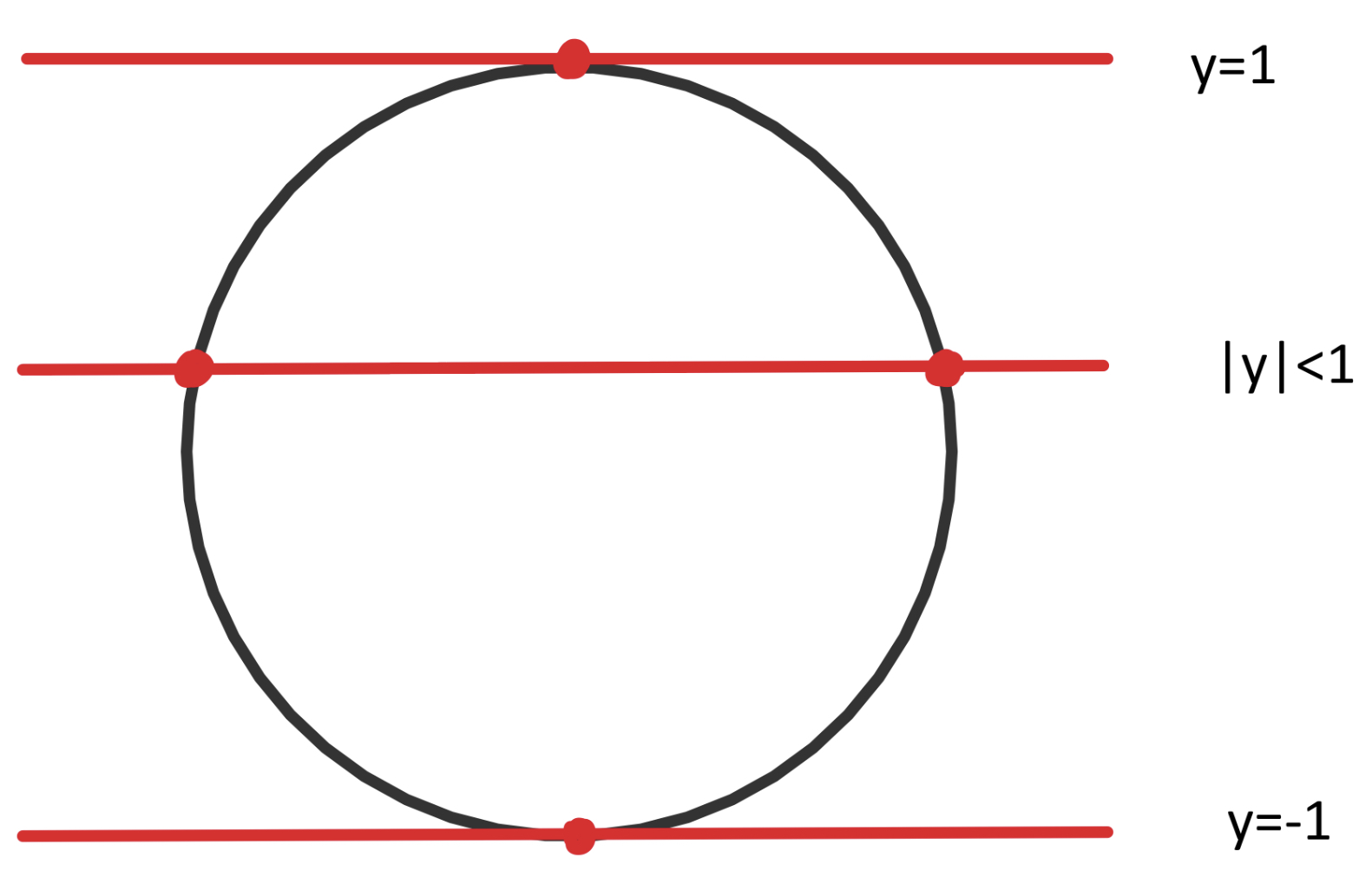}
	\caption{Level Sets of a Circle}
	\label{fig:circle_levels}
\end{figure} 

In order to compute the homology of a space with Morse theory,
we need to look at how the gradient $\nabla f$ acts with respect to the critical points. 
To do so, we bring in two lemmas proved in \cite{farber_polygon}. 

% Faber 4, homology of manifold with boundary
\begin{lemma}\label{farber4}
	Let $M$ be a smooth, compact manifold with boundary. 
	Assume that $M$ is equipped with a Morse function $f: M\to [0, 1]$ 
	and with a smooth involution $\tau: M\rightarrow M$ satisfying the following properties: 
	\begin{enumerate}
		\item $f$ is $\tau$ invariant, i.e. $f(\tau x) = f(x)$ for any $x \in M$; 
		\item The critical points of $f$ coincide with the fixed points of the involution; 
		\item $f^{-1}(1) = \partial M$ and $1 \in [0, 1]$ is a regular value of $f$. 
	\end{enumerate}
	Then each homology group $H_i (M)$ is free abelian of rank 
	equal the number of critical points of $f$ having Morse index $i$. 
\end{lemma}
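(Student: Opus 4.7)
The plan is to show the Morse chain complex associated to $(M, f)$ has vanishing differential; since $H_k(M;\mathbb Z)$ is then just the $k$-th chain group, this immediately gives the desired freeness and rank. The involution $\tau$ is the tool that forces the differentials to vanish.

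First I would average an arbitrary Riemannian metric on $M$ over the $\mathbb Z/2$-action generated by $\tau$ to produce a $\tau$-invariant metric. With respect to this metric the gradient flow of $f$ is $\tau$-equivariant, so each stable and unstable manifold is $\tau$-invariant, and the resulting Morse-Smale CW decomposition (one $k$-cell per index-$k$ critical point) is $\tau$-equivariant.

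Second, I would analyze $\tau$ locally at each critical point $p$ of index $k$. Because the fixed-point set of a smooth involution is a submanifold whose tangent space at a fixed point is the $(+1)$-eigenspace of the differential, and because $p$ is isolated in the fixed-point set by hypothesis, we must have $d\tau_p = -\mathrm{id}$ on $T_pM$. Either an equivariant Morse lemma, or a direct degree computation on a small sphere in the unstable manifold $W^u(p)$ (which is $\tau$-invariant by the first step), then shows that the action of $\tau_*$ on the local relative group $H_k(M^{c+\varepsilon}, M^{c-\varepsilon}) \cong \mathbb Z$ is multiplication by $(-1)^k$, the degree of the antipodal map on $S^{k-1}$.

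Third, assembling the local pictures, $\tau_*$ acts on the Morse chain group $C_k$ by $(-1)^k \cdot \mathrm{id}$. Since $\tau$ is a cellular map, $\tau_* \circ \partial = \partial \circ \tau_*$, which reads $(-1)^{k-1} \partial = (-1)^k \partial$ on $C_k$, so $2\partial = 0$. Because $C_{k-1}$ is free abelian and therefore torsion-free, $\partial = 0$ in every degree. The homology of the Morse complex is then $C_*$ itself, which is free abelian with the stated rank.

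The main obstacle is the second step: upgrading the infinitesimal statement $d\tau_p = -\mathrm{id}$ to the homological statement that $\tau_*$ acts by $(-1)^k$ on the relative group for the cell attached at $p$. The $\tau$-invariance of $W^u(p)$ obtained from the equivariant metric is essential here, and one must either invoke a simultaneous normal form for $f$ and $\tau$ or argue by homotoping $\tau|_{W^u(p)}$ to its linearization on a small disk without creating new fixed points. The hypothesis that $1$ is a regular value with $f^{-1}(1) = \partial M$ ensures every critical point lies in the interior, so the Morse CW decomposition genuinely covers all of $M$ and no boundary complications arise.
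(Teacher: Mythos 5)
The paper does not prove this lemma itself; it is imported verbatim from Farber and Sch\"utz \cite{farber_polygon}, and your argument is essentially theirs. Averaging the metric over $\tau$ to make the handle (equivalently Morse) chain complex $\tau$-equivariant, using isolatedness of fixed points to force $d\tau_p = -\mathrm{id}$ and hence $\tau_* = (-1)^k$ on $C_k$, and then reading off $2\partial = 0$ from the chain-map identity so that $\partial = 0$ over the torsion-free groups $C_*$, is exactly the intended proof, and your remark on the boundary hypothesis correctly explains why all critical points are interior. The one place you might add a word is in asserting Morse--Smale transversality for a $\tau$-invariant metric: this holds because all flow lines between distinct critical points avoid $\mathrm{Fix}(\tau)$, where the action is free, so equivariant perturbation achieves genericity (or one can sidestep the issue entirely by using the cellular chain complex of the equivariant handle decomposition rather than the flow-line complex).
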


% Farber 5, homology of submanifold, free basis
\begin{lemma}\label{farber5}
	Let $M$ be a smooth, compact, connected manifold with boundary. 
	Suppose that $M$ is equipped with a Morse function $f: M\rightarrow [0, 1]$ 
	and with a smooth involution $\tau: M\rightarrow M$ satisfying 
	the properties of \underline{Lemma~\ref{farber4}}. 
	Assume that for any critical point $p \in M$ of the function $f$, 
	we are given a smooth, closed connected submanifold 
		\[ X_p \subseteq M \]
	with the following properties: 
	\begin{enumerate}
		\item $X_p$ is $\tau$-invariant, i.e. $\tau(X_p) = X_p$;
		\item $p \in X_p$ and for any $x \in X_p - \{ p \}$, one has $f(x) < f(p)$; 
		\item the function $f|_{X_p}$ is Morse and the critical points of the restriction
			$f|_{X_p}$ coincide with the fixed points of $\tau$ lying in $X_p$. 
			In particular, $\dim X_p = \emph{ind}(p)$. 
		\item For any fixed point $q\in X_p$ of $\tau$, the Morse indexes of $f$ and $f|_{X_p}$ 
			at $q$ coincide.
	\end{enumerate}
	Then each submanifold $X_p$ is orientable and the set of homology classes 
	realized by $\{ [X_p] \}_{p \in \text{Fix}(\tau)}$ forms a free basis of the integral homology group $H_*(M)$. 
	Thus the inclusion induces an isomorphism
		\[ \bigoplus_{\emph{ind}(p) = i} H_i (X_p) \to H_i(M) \]
	for any $i$. 
\end{lemma}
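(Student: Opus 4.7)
The plan is to first apply Lemma~\ref{farber4} to each $X_p$ equipped with $\tau|_{X_p}$ and $f|_{X_p}$. The four hypotheses imposed on $X_p$ are precisely what is needed: $f|_{X_p}$ is a Morse function on a smooth compact manifold (closed, so with empty boundary, making condition (3) of Lemma~\ref{farber4} vacuous), invariant under the involution $\tau|_{X_p}$, and whose critical set equals the fixed-point set of $\tau|_{X_p}$. Lemma~\ref{farber4} then gives that each $H_i(X_p)$ is free abelian of rank equal to the number of fixed points of $\tau$ lying in $X_p$ with Morse index $i$. In particular, condition (2) forces $p$ to be the unique global maximum of $f|_{X_p}$, and combined with condition (4) and $\dim X_p = \text{ind}(p)$, this says $p$ has index $\dim X_p$ as a critical point of $f|_{X_p}$. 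Hence $H_{\text{ind}(p)}(X_p) \cong \mathbb{Z}$, which proves $X_p$ is orientable and furnishes a canonical generator $[X_p]$.

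Next I would show that $\{[X_p]\}$ forms a basis of $H_*(M)$. Lemma~\ref{farber4} applied to $M$ already gives
\[
\operatorname{rank} H_i(M) = \#\{p \in \text{Fix}(\tau) : \text{ind}(p) = i\},
\]
so the proposed family has the right cardinality and it suffices to prove linear independence. For this I would use the Morse-theoretic filtration $M_c = f^{-1}([0,c])$. After a small perturbation if necessary, we may assume all critical values are distinct; ordering the critical points $p_1, p_2, \ldots$ by increasing $f$-value, passing the level $f(p)$ where $p$ has index $i$ attaches an $i$-handle, so
\[
H_i\bigl(M_{f(p)+\varepsilon},\, M_{f(p)-\varepsilon}\bigr) \cong \mathbb{Z},
\]
generated by the core of the handle. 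Because $f(x) < f(p)$ on $X_p \setminus \{p\}$, the cycle $X_p$ sits inside $M_{f(p)+\varepsilon}$ and, in the relative chain group, collapses to a small $i$-disk in $X_p$ centered at $p$. A downward induction over critical values then yields linear independence and hence the isomorphism $\bigoplus_{\text{ind}(p)=i} H_i(X_p) \to H_i(M)$.

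The main obstacle is proving that the image of $[X_p]$ in the relative homology at its own level is a \emph{generator}, not a nonzero multiple. This reduces to showing that $X_p$ is tangent at $p$ to the descending disk of the negative gradient flow. Here the $\tau$-invariance is crucial: since $\tau$ fixes $p$ and preserves $f$, the Hessian $\operatorname{Hess}_p f$ is $\tau$-invariant, so its negative eigenspace $E^- \subset T_p M$ is a $\tau$-invariant subspace of dimension $\text{ind}(p)$. The tangent space $T_p X_p$ is also $\tau$-invariant, of the same dimension, and condition (2) forces $f|_{X_p}$ to have a local maximum at $p$, so the restriction of $\operatorname{Hess}_p f$ to $T_p X_p$ is negative definite. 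By dimension count $T_p X_p = E^-$, hence $X_p$ is tangent to the descending disk, giving a local intersection number $\pm 1$ with the ascending disk of $p$ and identifying the image of $[X_p]$ with a generator of the relative $H_i$. The remaining inductive bookkeeping (handling classes of $X_q$ for $q$ with $f(q) > f(p)$ that pass through the level of $p$) is routine once this local statement is in hand.
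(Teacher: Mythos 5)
The paper does not prove this lemma itself; it is imported verbatim from Farber and Sch\"utz (\cite{farber_polygon}, Lemma~5), where a proof along the lines you sketch is given: apply Lemma~\ref{farber4} to each $(X_p, \tau|_{X_p}, f|_{X_p})$ to obtain orientability and a preferred generator $[X_p]$, then prove linear independence through the Morse filtration by showing each $[X_p]$ hits a generator of the relative homology of the level at $f(p)$. Your outline matches that route and the rank count, the orientability deduction from $H_{\dim X_p}(X_p)\cong\mathbb Z$, and the identification of the filtration as the mechanism for independence are all sound.

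The one step that is stated incorrectly is the claim that ``by dimension count $T_pX_p = E^-$.'' That inference fails for two reasons. First, $\tau$-invariance of $T_pX_p$ gives no leverage: since the fixed-point set of $\tau$ is discrete (the fixed points coincide with the isolated critical points of a Morse function), $d\tau_p$ acts as $-\mathrm{id}$ on all of $T_pM$, so \emph{every} subspace is $\tau$-invariant. Second, a subspace of dimension $\mathrm{ind}(p)$ on which $\mathrm{Hess}_p f$ is negative definite need not be the negative eigenspace $E^-$ (take $q(x,y)=x^2-y^2$ on $\mathbb R^2$ and the line spanned by $(1,2)$). Fortunately the conclusion you actually need is weaker and does follow: negative-definiteness of $\mathrm{Hess}_pf$ on $T_pX_p$ forces $T_pX_p\cap E^+=\{0\}$, so by dimension count $T_pX_p\oplus E^+ = T_pM$, i.e.\ $X_p$ is \emph{transverse} to the ascending disk at $p$; together with condition~(2) (which ensures $p$ is the only intersection of $X_p$ with the ascending disk near that level), this gives local intersection number $\pm1$ and hence a generator of the relative group. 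You should also be cautious with the ``small perturbation so that critical values are distinct'' step, since a generic perturbation will destroy the $\tau$-invariance of $f$; the cleaner argument works level-by-level without perturbing, treating the finitely many critical points in a single critical level simultaneously.
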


These two lemmas operate based on the definition of the Morse-Smale chain complex, 
where we compute the differential based on the gradient paths 
between index points. 
Two gradient paths have opposite orientation and cancel out
%% changed "path will have" to "paths have" 
if there exists an involution $\tau$ as defined above. 
In such a case, we show that $f$ is a perfect Morse function. 

Note that both of these theorems may be extended to a manifold without boundary vacuously. 
%% deleted "While we could compute the homology 
%%% 		groups of such a manifold with the usual methods of Morse theory"
%% changed "can be extended" to "may be extended"
Using these theorems are helpful when we want to consider the inclusion 
%% changed "theorems will be helpful" to "theorems are helpful" 
of a sublevel set into a manifold.

%%%%%%%%
%% Motion Spaces Constricted to Lines - Examples
%%%%%%%%

\section{Examples of Constricted Motion Spaces}\label{examples}

We now give some more examples of constricted motion spaces, 
namely those constricted to motion along a horizontal line in $\mathbb R^2$. 
We give visual representations for these spaces and compute their homology, 
showing they agree with \underline{Theorem~\ref{bigtheorem1}} or \underline{Theorem~\ref{bigtheorem2}}.

For some height $h \in [-1, 1]$, we define 
\begin{equation}\label{eq:Ah}
	A_h = s_2^{-1}(h).
\end{equation}
We analyze $A_h$ by looking at level sets of the restriction $s_1|_{A_h}$
%% changed "we can analyze" to "we analyze" 
(which we refer to as $s_1$ for clarity). 
%% changed "we will refer to" to "we refer to" 
Because $A_h$ itself is a level set of $s_2$, 
we are investigating ``level sets of level sets."
For clarity, in this section we only refer to the level sets of $s_1$ as level sets, 
and $A_h$ is the motion space of a robotic arm constricted to a line. 
%% changed "will be the motion space" to "is the motion space" 
Lastly for each $J \subseteq \{ 1, \ldots, k \}$, 
we define $u_J = (\theta_1, \ldots, \theta_k)$ to be the configuration 
such that $\theta_i = \frac\pi2$ if $i \in J$ and $\theta_i = -\frac\pi2$ otherwise. 

% figure, k = 2, (1/2, 1/2), h = 1/2
\begin{wrapfigure}{l}{0.15\textwidth}
	\centering
	\includegraphics[width=0.058\textwidth]{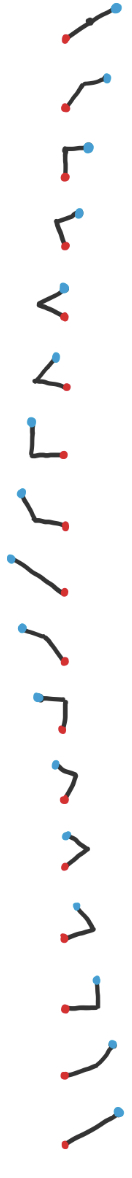}
	\caption{\\ Cycle of \\ configurations \\ of $A_\frac12$ when \\ $\ell = (\frac12, \frac12)$}
	\label{fig:k2half}
\end{wrapfigure} 

Note 
\begin{equation}
	s_2(u_J) = \sum_{i \in J} \ell_i - \sum_{i \notin J} \ell_i. 
\end{equation}

% h = -1, 1
\begin{exmp}\label{ex:pm1}
	Consider $\mathcal L_k$, a robotic arm with length vector $\ell = (\frac1k, \ldots, \frac1k)$. 
	If $h = 1$, then the motion space $A_h \cong \mathbb D^0$ is just a point, 
	namely the configuration $(\frac\pi2, \ldots, \frac\pi2)$ 
	where the linkage is completely ``stretched out." 
	The same happens when $h = -1$ with the configuration $(-\frac\pi2, \ldots, -\frac\pi2)$. 
	
	Since this space is contractible, we know $\beta_0 = 1$ 
	and $\beta_j = 0$ for all $j \geq 1$. 
	Observe $J = \emptyset \subseteq \{ 1, \ldots, k \}$ is the only subset
	such that $s_2(u_J) \leq -1$. 
	Further, there are no subsets $J$ such that $s_2(u_J) > 1$. 
	Then, $a_0 = 1$ while $a_j = 0$ for all $j \geq 1$ and $b_j = 0$ for all $j$. 
	Thus $\beta_j = a_j + b_{j + 1}$ for all $j$,   
	which is the conclusion of \underline{Theorem~\ref{bigtheorem1}}. 
\end{exmp}
	
% k = 2, (1/2, 1/2), h = 1/2
\begin{exmp}\label{ex:k2half}
	Let $\mathcal L_2$ have edge vector $\ell = (\frac12, \frac12)$. 
	Suppose $h = \frac12$. 
	Consider the level sets $s_1^{-1}(x)$. 
	When $x = \pm \frac{\sqrt{3}}2$,
	the level set is a point, a ``stretched out" collinear configuration
	because $(\frac12)^2 + (\frac{\sqrt{3}}2)^2 = 1$. 
	When $|x| < \frac{\sqrt{3}}2$, 
	there are two possible configurations in the level set. 
	This derives from the two possible configurations outlined in \underline{Example~\ref{ex:triangle}}. 
	%% changed from "This comes from" to "This derives from" 
	Thus, we see that the level sets of $A_h$ are precisely the level sets of a circle, 
	as in \underline{Example~\ref{ex:circle_levels}}. 
	Thus, $A_h \cong \mathbb T^1 = \mathbb S^1$. 
	This loop is displayed in \underline{Figure~\ref{fig:k2half}}, 
	where the red dot is $\nu_0$ fixed at the origin, 
	and the blue dot is $\nu_2$ moving along the line $y = h$. 
	Then, $\beta_0 = 1$ because we have one connected component, 
	and $\beta_1 = 1$ because a circle has one hole in the middle. 
	
	There are 4 possible subsets $J \subseteq \{1, 2 \}$. 
	Thus 
	%% changed "we can see" to "thus" 
	\begin{equation}\label{eq:k2half}
		s_2(u_J) = 
		\begin{cases}
			1, &\quad J = \{1, 2 \} \\
			0, & \quad J = \{1 \} \text{ or } J = \{ 2 \} \\
			-1, & \quad J = \emptyset
		\end{cases}.
	\end{equation}
	Since $h = \frac12$, there is only one subset such that $s_2(u_J) \leq -\frac12$
	and only one subset such that $s_2(u_J) > \frac12$. 
	Then $a_0 = 1$ since $|\emptyset| = 0$ and $b_2 = 1$ since $|\{1, 2 \}| = 2$, 
	while the other $a_j$ and $b_j$ are 0. 
	Then $\beta_0 = a_0 + b_1$ and $\beta_1 = a_1 + b_2$, 
	which again agrees with \underline{Theorem~\ref{bigtheorem1}}. 
	Note, this is the case for when $0 < |h| < 1$. 
	
	Furthermore, this example agrees with \underline{Theorem~\ref{bigtheorem2}}. 
	%% changed from "We can also show that this examples agrees" to "Furthermore, this example agrees" 
	Let $\gamma$ be the horizontal line $y = \frac12$. 
	There are two circles of a critical radius, $|r_J| = 1$ and $|r_J| = 0$. 
	Then $\gamma$ only intersects with the larger circle at the points $(\pm \frac{\sqrt{3}}2, \frac12)$. 
	These correspond to the two ``stretched out" configurations. 
	So $\mu_J = 1$ when $J = \emptyset$ or $J = \{ 1, 2\}$, and $\mu_J = 0$ otherwise. 
	This gives $a_0 = 1$, $b_2 = 1$. 
	Otherwise, $a_j, b_j = 0$. 
	Thus, we see $\beta_0 = a_0 + b_1$ and $\beta_1 = a_1 + b_2$. 
\end{exmp}

% k = 2, (1/2, 1/2), h = 0
\begin{exmp}\label{ex:k2zero}	
	Again let $\mathcal L_2$ have edge vector $\ell = (\frac12, \frac12)$.
	Now suppose $h = 0$.  
	The level sets $s_1^{-1}(x)$ where $x\neq 0$ behave similarly to the level sets of a circle. 
	The end level sets $s_1^{-1}(x)$ where $x = \pm 1$ 
	are both points, the two ``stretched out" configurations.
	The level sets $s_1^{-1}(x)$ with $0 < |x| < 1$
	are still just two points. 
	Consider the level set $s_1^{-1}(0)$. 
	Notice that this level set allows infinitely many collinear configurations.
	Thus, $A_h$ is not a smooth manifold.
	%% changed "will not be a smooth" to "is not a smooth" 
	However, the level set $s_1^{-1}(0)$ is itself a circle. 
	%% removed the exclamation point
	These are the configurations $(\theta, -\theta)$.  
	The motion space $A_h$
	is homeomorphic to two circles with two points identified. 
	The two points identified are the vertical collinear configurations 
	$(\frac\pi2, -\frac\pi2)$ and $(\frac\pi2, -\frac\pi2)$, 
	visualized in \underline{Figure~\ref{fig:circles2}}. 
	Then $\beta_0 = 1$ and $\beta_1 = 3$
	because this space is homotopic to the wedge of three circles. 
	This is not a manifold because near the configurations 
	$(\frac\pi2, -\frac\pi2)$ and $(\frac\pi2, -\frac\pi2)$, 
	the motion space is homeomorphic to the wedge of two lines, not $\mathbb R^1$. 
	
	Using \underline{Equation~\ref{eq:k2half}}, 
	there are three subsets $J$ such that $s_2(u_J) \leq 0$, 
	and one subset such that $s_2(u_J) > 0$. 
	After differentiating by cardinality of $J$, we see 
	$a_0 = 1$, $a_1 = 2$ and $b_2 = 1$. 
	Then $\beta_0 = a_0 + b_1$ and $\beta_1 = a_1 + b_2$. 
\end{exmp}

% figure, k = 2, (1/2, 1/2), h = 0
\begin{figure}[h]
	\centering
	\includegraphics[width=0.4\textwidth]{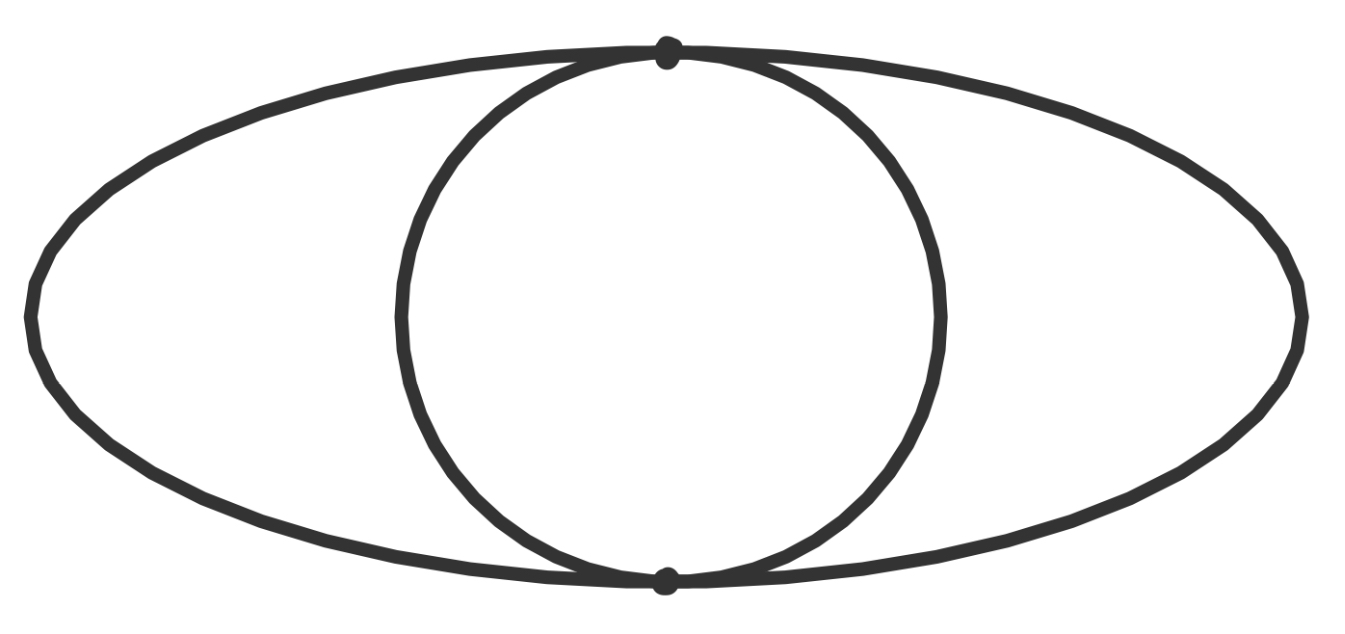}
	\caption{Motion space $A_0$ when $\ell = (\frac12, \frac12)$}
	\label{fig:circles2}
\end{figure} 

% dim A_h = k - 1
There are three points to take away from these last examples. 
First, most of our level sets are of dimension 0, 
%% change "level sets were of" to "level sets are of" 
which is two less than the number of edges. 
In general, we find that $\dim A_h = k - 1$
because of the Implicit Function Theorem, 
and its level sets have dimension $k - 2$ for the same reason. 
%% changed "sets will have dimension" to "sets have dimension" 

% collinear configurations
Second, 
the topology of $A_h$ changes when 
the constricted motion space admits more collinear configurations. 
The first result of this paper amounts to computing the Betti numbers of $A_h$
%% changed "paper will amount" to "paper amounts" 
by counting the number of collinear configurations. 

% non-manifold
Third, in \underline{Example~\ref{ex:k2zero}}
the horizontal line has a tangential intersection with the origin, 
which produces a level set of dimension 1.
The motion space is not a smooth manifold, 
and its Betti numbers cannot be computed with \underline{Theorem~\ref{bigtheorem2}}.
Next, we consider examples with 3 edges. 
%% changed "we will consider" to "we consider" 

\begin{figure}[h]
	\centering
	\includegraphics[width=0.44\textwidth]{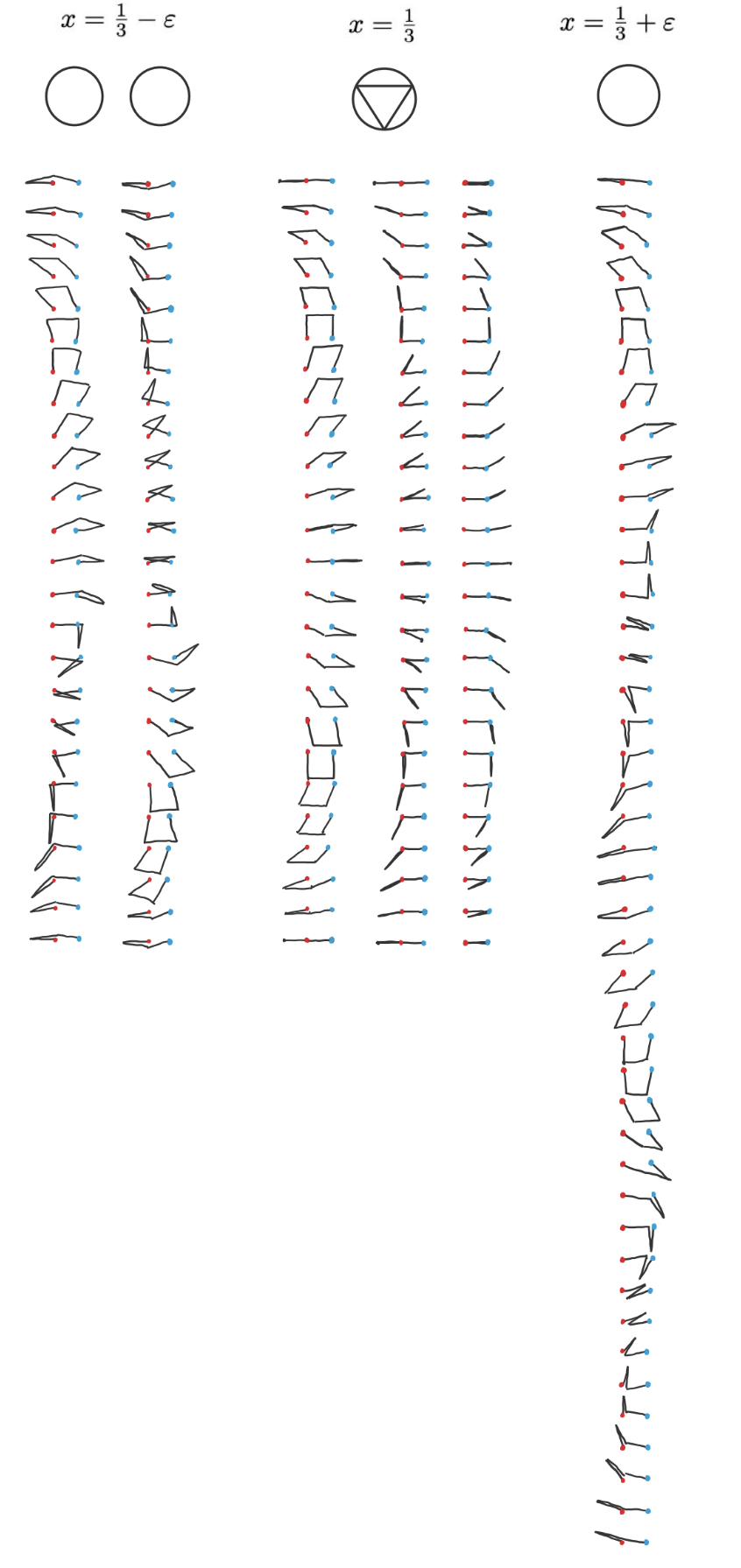}
	\caption{Level Sets $s_1^{-1}(x)$ of $A_0$ when $\ell = (\frac13, \frac13, \frac13)$ 
			\\ and $\varepsilon > 0$ is small}
	\label{fig:ThreeEdges}
\end{figure}

% k = 3, (1/3, 1/3, 1/3), h = 0
\begin{exmp}\label{ex:k3}
	Let $\mathcal L_3$ be a robotic arm with edge lengths $\ell = (\frac13, \frac13, \frac13)$. 
	Suppose $h = 0$. 
	Note that $s_1^{-1}(x) \cong s_1^{-1}(-x)$ for all $x$
	because each edge reflects over the y-axis. 
	%% "we can reflect each edge" to "each edge reflects" 
	Because we admit 3 collinear configurations at $x = 1$, 
	the level set is two circles with three points identified. 
	We also understand it as one circle with three pairs of points identified. 
	%% changed from "we can also understand it" to "we also understand it" 
	We include the level sets with $x = \frac13 - \varepsilon$ and $x = \frac13 + \varepsilon$ 
	to illustrate how they attach onto the level set at $x = \frac13$
	in \underline{Figure~\ref{fig:ThreeEdges}}. 
	The two notions of how to produce $s_1^{-1}(\frac13)$ 
	come from the two ways the nearby level sets are attached. 
	
	% morse function
	We compute the homology by using $s_1$ as a Morse function. 
	From the differential 
	\begin{equation}
		Ds_1 = 
		\begin{bmatrix}
			-\frac13 \sin\theta_1 & -\frac13 \sin\theta_2 & -\frac13 \sin\theta_3
		\end{bmatrix}
	\end{equation}
	we see that $\emph{rk}Ds_1 < 1$ when $\theta_i \in \{0, \pi\}$ for all $i = 1, 2, 3$. 
	From the restriction to $A_0 = s_2^{-1}(0)$, we know 
	\begin{equation}
		\frac13 \sin\theta_1 + \frac13 \sin\theta_2 + \frac13\sin\theta_3 = 0.
	\end{equation}
	for any $(\theta_1, \theta_2, \theta_3) \in A_h$. 
	Thus, each choice of $\theta_i \in \{ 0, \pi \}$ is a critical point in $A_0$. 
	Further, we see the Hessian 
	\begin{equation}
		H_{s_1} = 
		\begin{bmatrix}
			-\frac13 \cos\theta_1 & 0 & 0 \\
			0 & -\frac13 \cos\theta_2 & 0 \\
			0 & 0 & -\frac13 \cos\theta_3 
		\end{bmatrix}
	\end{equation}
	is non-degenerate for all choices $\theta_i \in \{0, \pi \}$. 
	Therefore, we have $8 = 2^3$ critical points, corresponding to 
	the two choices between 0 and $\pi$ for the three angles $\theta_i$, $i = 1, 2, 3$. 
	
	% index
	Looking at the level set $s_1^{-1}(\frac13)$, 
	the critical points $(0, 0, \pi)$, $(0, \pi, 0)$, 
	%% deleted  "we can see that" 
	and $(\pi, 0, 0)$ have index 1. 
	This is because they locally look like level sets of the function $f(x, y) = -x^2 + y^2$. 
	Due to the symmetry $s_1^{-1}(\frac13) \cong s_1^{-1}(-\frac13)$, 
	the critical points $(0, \pi, \pi)$, $(\pi, 0, \pi)$, 
	%% changed "we know that the critical points" to "the critical points" 
	and $(\pi, \pi, 0)$ also have index 1. 
	The minimum $(\pi, \pi, \pi)$ has index 0, 
	and the maximum $(0, 0, 0)$ has index 2. 
	% homology
	Observe each of these are fixed points of the involution $\tau^\prime: A_0\to A_0$
	defined by $(\theta_1, \theta_2, \theta_3)\mapsto (-\theta_1, -\theta_2, -\theta_3)$, 
	the reflection over the x-axis. 
	Then, we apply \underline{Lemma~\ref{farber4}}. 
	%% changed "we can apply" to "we apply"
	Thus, $\beta_0 = 1$, $\beta_1 = 6$ and $\beta_2 = 1$. 
	Note this is a smooth manifold. 
	%% changed "we can show this is a smooth manifold"
	%% to "Note this is a smooth manifold." 
	
	Therefore, $A_0$ is a genus 3 surface. 
	
	% theorem 1
	We now show this agrees with \underline{Theorem~\ref{bigtheorem1}}. 
	There are eight subsets of $\{ 1, 2, 3 \}$, for which we see 
	\begin{equation}
		s_2(u_J) = 
		\begin{cases}
			\; 1, & \quad |J| = 3 \\
			\; \frac13, & \quad |J| = 2 \\
			-\frac13, & \quad |J| = 1 \\
			-1, & \quad |J| = 0
		\end{cases}
	\end{equation}
	Then $a_0 = 1$, $a_1 = 3$, $b_2 = 3$, and $b_3 = 1$, by counting the number of subsets
	of each cardinality, and the other $a_j$ and $b_j$ are 0. 
	Thus $\beta_0 = a_0 + b_1$, $\beta_1 = a_1 + b_2$ and $\beta_2 = a_2 + b_3$. 
	
	% theorem 1
	We further show this agrees with \underline{Theorem~\ref{bigtheorem2}}. 
	%% changed "we can further" to "we further" 
	Let $\gamma$ be the horizontal line $y = 0$.
	There are two circles of a critical radius, $|r_J| = 1$ and $|r_J| = \frac13$. 
	Observe $\gamma$ intersects with each circle twice. 
	Thus, $\mu_J = 1$ for all $J \subseteq \{1, 2, 3 \}$. 
	Note there is one subset J for which $|J| = 1$, three for which $|J| = 1$, 
	three more for which $|J| = 2$, and one final subset for which $|J| = 3$. 
	When $|J| \leq 1$, we see $s(u_J) < 0$, 
	and when $|J| \geq 2$, we see $s(u_J) > 0$. 
	Thus $a_0 = 1$, $a_1 = 3$, $b_2 = 3$, and $b_3 = 1$,
	%% changed "This gives" to "Thus" 
	just like with the other method.
\end{exmp}

Observe how, like \underline{Example~\ref{ex:k2half}},
\underline{Example~\ref{ex:k3}} fits both cases 1 and 2:
it is both a horizontal line and has only transverse intersections with the circles of a critical radius. 
The final example only fits case 2. 
Note, this example uses a smooth curve instead of a line.
%% added  "Note, this example uses a smooth curve instead of a line." 

\begin{figure}[h]
	\centering
	\includegraphics[width=0.3\textwidth]{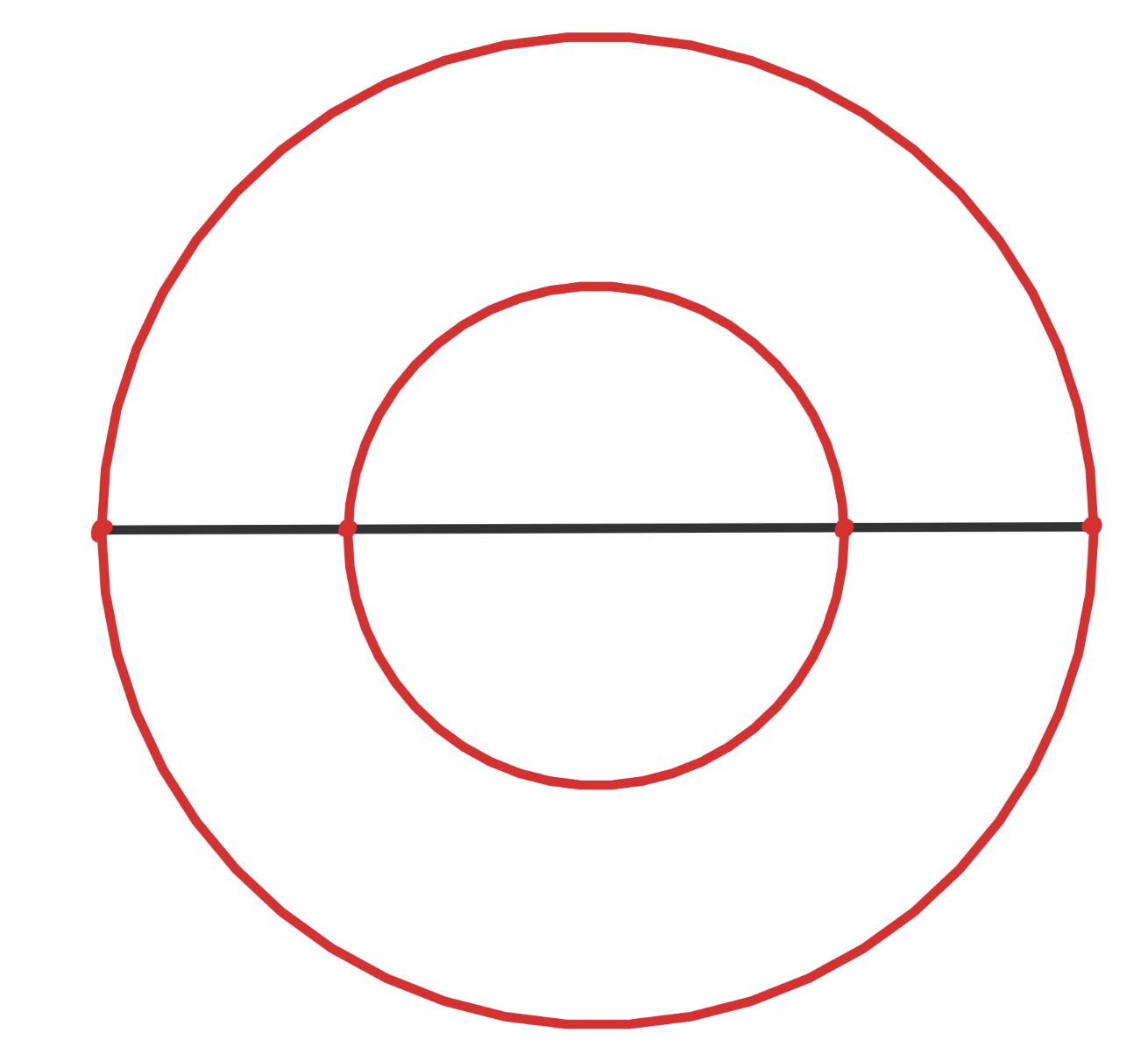}
	\caption{The line $y = 0$ intersects twice with each of the two circles of a critical radius}
	\label{fig:k3_curve}
\end{figure}

\begin{exmp}\label{ex:curve}
	We use the curve from \underline{Figure~\ref{fig:ex_smooth_curve}} as $\gamma$. 
	% changed "We'll use" to "We use" 
	Suppose $\mathcal L_3$ is a robotic arm with length vector $(\frac13, \frac13, \frac13)$. 
	%% changed "a robotic arm" to "is a robotic arm" 
	We first see that there are two intersections with the circle radius 1, 
	and four intersections with the circle of radius $\frac13$. 
	Note 
	\begin{equation}
		r_J = 
		\begin{cases}
			1 &\quad J = \emptyset, \{1, 2, 3 \} \\
			\frac13 &\quad \text{ otherwise }
		\end{cases}
	\end{equation}
	Then we see $\mu_J = 1$ for $J = \emptyset, \{ 1, 2, 3 \}$, 
	while $\mu_J = 2$ otherwise. 
	Further, note that for subsets $J = \emptyset, \{1\}, \{2\}, \{3\}$, 
	we have $s(u_J) < 0$
	Otherwise, $s(u_J) > 0$. 
	Thus we see $a_0 = 1$, $a_1 = 6$ and $a_j = 0$ for all $j \geq 2$; 
	$b_2 = 6$, $b_3 = 1$, and $b_j = 0$ for all $j \neq 2, 3$. 
	Finally, we compute the Betti numbers of $H_*(A_\gamma)$: 
	%% changed "we can compute" to "we compute" 
	$\beta_0 = 1$, $\beta_1 = 12$, $\beta_2 = 1$, 
	and $\beta_j = 0$ for all $j \geq 3$. 
\end{exmp}

\begin{figure}[h]
	\centering
	\includegraphics[width=0.3\textwidth]{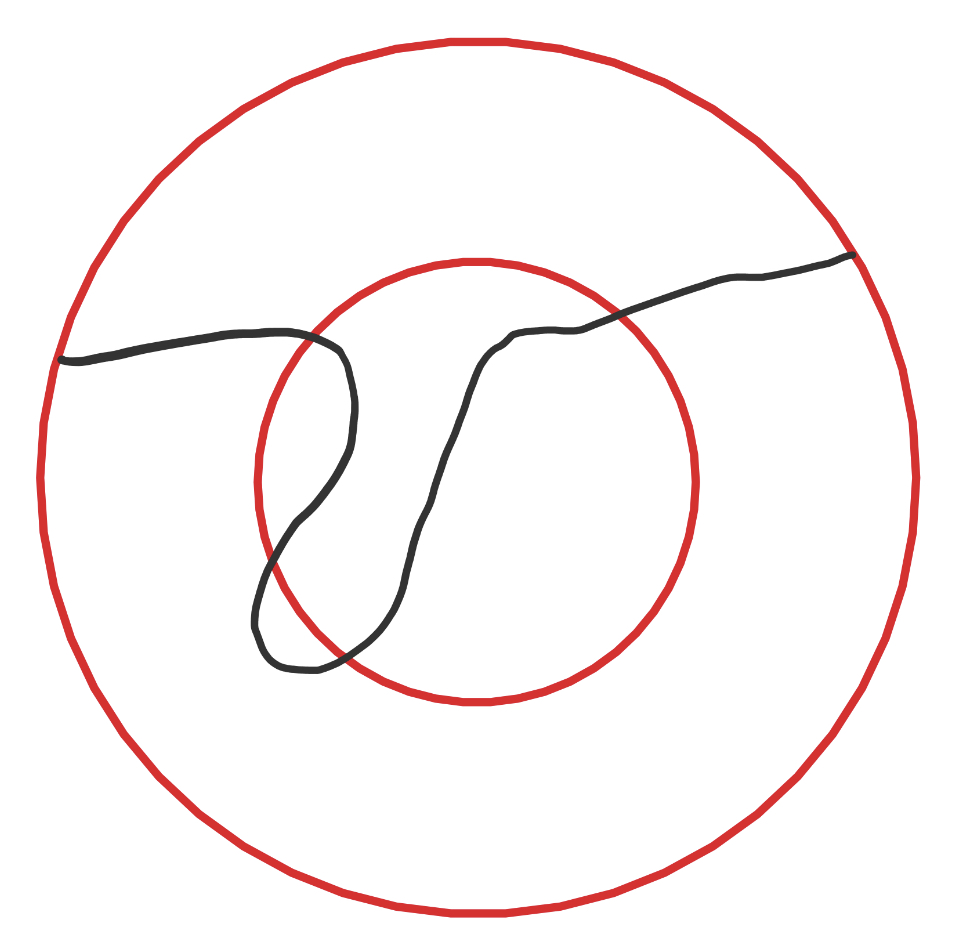}
	\caption{}
	\label{fig:ex_smooth_curve}
\end{figure}

%%%%%%%%
%% level set theorems
%%%%%%%%

\section{Lemma on Level Sets}\label{levels}

Now that we have stated the necessary background tools, 
we prove a technical theorem. 
%% changed "we will prove" to "we prove" 
One of the critical steps of the proof of \underline{Theorem~\ref{bigtheorem1}}
is to show a deformation retraction onto a level set. 
%% changed "will be to show" to "is to show" 
This is not true in general, but because $M(\mathcal{L}_k)$ is such a nice manifold, 
we show that this is possible. 
We do this using Morse Theory.

We know that a smooth, compact, orientable manifold 
has finitely many critical points
%% changed "will have" to "has" 
because it must have finitely many nontrivial homology groups, 
each with trivial rank. 
As such, it has finitely many critical values. 
%% changed "it will have" to "it has" 
We use this fact to define a deformation retraction onto any level set
in such a manifold. 

%% this paragraph is added
Note, this is a well-known result found in \cite{milnor}. 
Though contained in Remarks 3.3 and 3.4, 
we rewrite it here in a more helpful form.

% deformation retraction
\begin{lemma}\label{def_retract}
	Let $M$ be a smooth, compact, oriented manifold, 
	and let $f: M\to \mathbb R$ be a Morse function. 
	%% added "and let f: M\to R be a Morse function" 
	For any $y\in \mathbb{R}$, there exists some $\delta > 0$ 
	such that $f^{-1}[y - \delta, y + \delta]$ deformation retracts
	onto $f^{-1}(y)$. 
\end{lemma}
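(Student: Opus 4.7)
The plan is to construct the desired deformation retract via the gradient flow of $f$ with respect to an auxiliary Riemannian metric on $M$, following the standard strategy of Milnor's Theorem 3.1.

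First I would observe that because $M$ is compact and $f$ is Morse, the critical points of $f$ are isolated and hence finite in number, so the set of critical values of $f$ is a finite subset of $\mathbb R$. For the given $y \in \mathbb R$, I would choose $\delta > 0$ small enough that $[y-\delta, y+\delta]$ contains no critical value of $f$ other than possibly $y$ itself, and small enough that $f^{-1}[y-\delta, y+\delta]$ is a compact tubular neighborhood of $f^{-1}(y)$.

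Next, equip $M$ with a Riemannian metric $g$ and form the gradient vector field $\nabla f$. Away from the critical set, the rescaled field $X = \nabla f / |\nabla f|^2$ is smooth and satisfies $X(f) \equiv 1$, so its flow $\phi_t$ pushes each regular level set $f^{-1}(c)$ onto $f^{-1}(c+t)$ at unit speed in the $f$-direction. Multiplying $X$ by a smooth bump function $\rho$ equal to $1$ on a neighborhood of $f^{-1}[y-\delta, y+\delta]$ and compactly supported in a slightly larger set where $\nabla f \neq 0$ produces a complete smooth field $\tilde X$, whose flow $\tilde\phi_t$ is globally defined. I then define the retraction
\begin{equation}
    H \colon f^{-1}[y-\delta, y+\delta] \times [0,1] \longrightarrow f^{-1}[y-\delta, y+\delta], \qquad H(x,t) = \tilde\phi_{-t(f(x)-y)}(x).
\end{equation}
At $t=0$ this is the identity; at $t=1$ the condition $\tilde X(f) = 1$ forces $f(H(x,1)) = y$, so the image lies in $f^{-1}(y)$; and $H(x,t) = x$ for every $x \in f^{-1}(y)$ and all $t$. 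This is the desired strong deformation retract.

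The main obstacle is the case in which $y$ is itself a critical value: then $\nabla f$ vanishes somewhere on $f^{-1}(y)$, the rescaling $1/|\nabla f|^2$ blows up, the rescaled flow is incomplete, and integral curves approach the critical point only in infinite time. The fix is local: near each Morse critical point on $f^{-1}(y)$ one passes to Morse normal coordinates and writes down an explicit deformation of a small neighborhood onto the singular level $\{-x_1^2 - \cdots - x_m^2 + x_{m+1}^2 + \cdots + x_n^2 = 0\}$, then glues this local cone retraction to the global gradient retraction via a partition of unity subordinate to the cover by critical charts and their complement. This is precisely the content of Milnor's Remarks 3.3 and 3.4. In our intended application the lemma is only invoked at regular values of $s_1|_{A_h}$, so the singular case will not obstruct the subsequent proofs.
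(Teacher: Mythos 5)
Your treatment of the regular value case (rescaled gradient flow, cutoff to get completeness, the explicit homotopy $H(x,t)=\tilde\phi_{-t(f(x)-y)}(x)$) is correct and is essentially the same as the paper's, which also flows along a gradient-like field. For the critical value case, both you and the paper ultimately defer to Milnor's Remarks 3.3--3.4, but the paper carries the argument further: it introduces the stable manifolds $S_i$ and unstable manifolds $T_i$ of the critical points on the level $y$, identifies $f^{-1}[y-\delta,y]$ with the mapping cylinder of the flow-induced quotient map $f^{-1}(y-\delta)\to f^{-1}(y)$ (in which each attaching sphere $S_i\cap f^{-1}(y-\delta)$ is collapsed), deformation retracts the cylinder onto its codomain, does the same from above with $T_i$, and composes. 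Your ``glue the local cone retraction to the global gradient retraction via a partition of unity'' is the step that is genuinely incomplete: partitions of unity glue vector fields or functions, but not deformation retractions, which have no ambient linear structure to average in. If you want a gluing argument you would need to glue at the level of gradient-like vector fields and then check that the resulting flow still retracts onto $f^{-1}(y)$, which is nontrivial precisely because integral curves on the stable manifold reach the critical point only in infinite time. The mapping cylinder picture in the paper sidesteps this reparametrization issue.

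The more serious problem is your final sentence. The lemma is \emph{not} invoked only at regular values, and it is applied to $s_2$ on $M(\mathcal L_k)$, not to $s_1|_{A_h}$. In the remark preceding the Mayer--Vietoris argument, the paper applies this lemma to conclude that $F_{h+\delta}\cap G_{h-\delta}=s_2^{-1}[h-\delta,h+\delta]$ deformation retracts onto $A_h=s_2^{-1}(h)$ for \emph{all} $h\in\mathbb R$, including critical values of $s_2$. Indeed, Theorem~\ref{bigtheorem1} is stated and proved for arbitrary $h$, and Example~\ref{ex:k2zero} ($\ell=(\tfrac12,\tfrac12)$, $h=0$, where $A_0$ fails to be a manifold) only makes sense because the critical case is covered. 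If $h$ were guaranteed regular, Milnor's Theorem~3.1 would already give the retraction and the lemma would be unnecessary; the entire content of Lemma~\ref{def_retract} is the critical case, so the part of the argument you propose to omit is the part the paper actually needs.
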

\begin{proof}
	% proof set up
	Let $M$ and $f$ be as stated.
	%% added "and f" 
	Consider $y\in \mathbb{R}$. 
	Since there are finitely many critical values, 
	let $\delta > 0$ such that
	no value in $[y - \delta, y) \cup (y, y + \delta]$ is critical. 

	% gradient vector field
	There exists a sufficiently ``nice'' gradient vector field $\nabla f$ such that
	%% changed "we know we can pick" to "there exists" 
	\begin{enumerate}
		\item $\nabla f(p) = 0$ if and only if $p$ is a critical point of $f$, and
		\item $df(\nabla f(p)) > 0$ if $p$ is not a critical point.
	\end{enumerate}
	Note $df$ is a covector field in $M$ 
	which maps the gradient to $\mathbb{R}$ through a dot product. 
	Then we define flow lines $\phi_t(p)$ which ``move'' $p$ along its gradient. 
	%% changed "we can create" to "we define" 
	Further, 
	\begin{equation}
		\lim_{t\to -\infty}\phi_t(p) = q_0 
			\qquad  \text{ and } \qquad 
		\lim_{t\to\infty}\phi_t(p) = q_1
	\end{equation}
	such that $q_0$ and $q_1$ are critical points where $f(q_0) < f(p) < f(q_1)$
	if $p$ is not critical. 
	In the case where $p$ is critical, $\phi_t(p) = p$ for all $t\in \mathbb{R}$. 
	It is known that $\phi_t$ is smooth (see \cite{pino}). 
	
	% no critical points
	If $y$ is not a critical value, then
	\begin{equation}
		f^{-1}(y) \times [y - \delta, y + \delta] \cong f^{-1}[y - \delta, y + \delta] 
	\end{equation}
	is a fundamental result from Morse theory \cite{pino}. 
	We do this by moving each point $p \in f^{-1}(y - \delta)$ along it's flow line 
	to some $p^\prime \in f^{-1}(y + \delta)$. 
	Since there are no critical points, 
	the flow of time $t$ takes $f^{-1}(y)$ to $f^{-1}(y + t)$, 
	and the above homeomorphism is obvious.  
	This is not the interesting case. 
	
	% critical value, stable submanifold
	Suppose $y$ is a critical value, 
	and $q_0, \ldots, q_m$ are the critical points such that $f(q_i) = y$. 
	We define a deformation retraction onto $f^{-1}(y)$ using these flow lines. 
	%% changed "we will define" to "we define" 
	Let 
	\begin{equation}\label{eq:stable}
		S_i = \{ p \in f^{-1}[y - \delta, y] : \lim_{t\to\infty} \phi_t(p) = q_i \}
	\end{equation}
	be the stable submanifold of critical point $q_i$, and let 	
	\begin{equation}\label{eq:unstable}
		T_i = \{ p \in f^{-1}[y, y + \delta] : \lim_{t\to-\infty} \phi_t(p) = q_i \}
	\end{equation}
	be the unstable submanifold. 
	If $q_i$ has index $n_i$, 
	then we know $S_i \cong \mathbb{D}^{n_i}$ 
	such that $S_i \cap f^{-1}(y - \delta) \cong \mathbb{S}^{n_i- 1}$ (see \cite{milnor}). 
	Note $f^{-1}[y - \delta, y]$ is homeomorphic to the mapping cylinder
	%% changed "we can view ... as" to "Note ... is" 
	of the quotient map $f^{-1}(y - \delta) \to f^{-1}(y)$
	where $S_i \cap f^{-1}(y - \delta)$ is collapsed to a point for each $i = 0, \ldots, m$. 
	This process is understood as sliding each point up its flow path. 
	%% changed "can be" to "is" 
	Mapping cylinders permit deformation retractions onto the codomain, 
	%% changed "we know mapping cylinders" to "Mapping cylinders" 
	so $f^{-1}[y - \delta, y]$ deformation retracts onto $f^{-1}(y)$. 
	
	% unstable submanifold, conclusion
	Using $T_i$, we similarly construct a deformation retraction from $f^{-1}[y, y + \delta]$
	%% changed "we can similarly construct" to "we similarly construct" 
	onto $f^{-1}(y)$. 
	The only difference is that $T_i \cong \mathbb{D}^{\dim M - n_i}$
	so that $T_i \cap f^{-1}(y + \delta) \cong \mathbb{S}^{\dim M - n_i- 1}$. 
	Nevertheless, we are still collapsing each sphere to a point. 
	The composition of these two maps is a deformation retraction
	%% changed from "By composing these two maps, we get a deformation retraction" 
	%%% to "The composition of these two maps is a deformation retraction" 
	from $f^{-1}[y - \delta, y + \delta]$ onto $f^{-1}(y)$. 
\end{proof}

%%%%%%%%
%% Proof of Theorem 1
%%%%%%%%

\section{Proofs of Main Results}\label{sec:proofs}

\subsection{Proof of \underline{Theorem~\ref{bigtheorem1}}}\label{subsec:proof1}

%intro
We finally compute the homology groups  
of a motion space constricted to a horizontal line in $\mathbb{R}^2$. 
Our proof relies on Morse theory. 
%% changed "will rely" to "relies" 
We adopt our strategy from \cite{farber_polygon}. 
Farber first used \underline{Lemma~\ref{farber4}} 
and \underline{Lemma~\ref{farber5}} to compute the homology 
of a sublevel set of a larger manifold; 
then showed that the target level set is a deformation retract 
of the compliment of the sublevel set; 
and lastly computed the homology of the target level set 
by splitting a long exact sequence 
and using the intersection to compute the kernel
%% added the
of the inclusion of the sublevel set into the larger manifold. 
The difference in our proof arises from which sublevel sets we use. 
Namely, we use the overlap of a sublevel set and a supralevel set. 
We show that our target level set is a deformation retract of this intersection. 
Then, we use a long exact Mayer-Vietoris sequence 
%% changed "we can use" to "we use" 
to compute the homology of our target level set
by investigating the maps of the inclusion 
of the sub- and supralevel sets. 

\begin{figure}[h]
	\centering
	\includegraphics[scale=0.16]{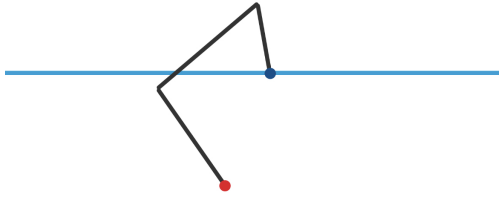}
	\caption{Motion space constricted along a horizontal line}
	\label{fig:y=h}
\end{figure}

%vertical collinear
This computation ultimately relies on counting the collinear configurations. 
Recall the notation from \underline{Section~\ref{examples}}, where 
$u_J = (\theta_1, \ldots, \theta_k)$ is the configuration such that 
$\theta_i = \frac\pi2$ for all $i \in J$, 
and $\theta_i = -\frac\pi2$ for all $i\notin J$. 
This is a vertical collinear configuration, 
and these are precisely the critical points of $s_2$ from \underline{Equation~\ref{eq:last_position}}. 
%% change "we'll see these are" to "these are"

% level set
We compute the homology groups of the constricted 
%% changed "We will compute" to "we compute" 
motion space along the line $y = h$ for any $h \in \mathbb{R}$. 
We define this space as the level set 
%% changed "we can define" to "we define" 
\begin{equation}
	A_h = s_2^{-1}(h) \subseteq M(\mathcal{L}_k).
\end{equation}
Our goal is to compute the homology groups of $A_h$ by counting 
certain vertical collinear configurations
because these are the critical points of $s_2$. 

%% s2 is Morse, uJ are critical points
\begin{proposition}
	The function $s_2$ is of Morse type with critical points $u_J$. 
\end{proposition}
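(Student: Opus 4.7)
The plan is to use the standard global coordinates $(\theta_1,\ldots,\theta_k)$ on $M(\mathcal{L}_k)\cong \mathbb{T}^k$ (coming from Equation~\ref{eq:torus}) and treat $s_2$ as the smooth function
\[
s_2(\theta_1,\ldots,\theta_k)=\sum_{i=1}^{k}\ell_i\sin\theta_i
\]
on the torus. Since this is a sum of smooth one-variable functions in separated coordinates, both its differential and its Hessian are easy to read off, and all that needs checking is (i) the location of the critical set, and (ii) non-degeneracy of the Hessian there.

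First I would compute
\[
\frac{\partial s_2}{\partial \theta_i}=\ell_i\cos\theta_i,\qquad i=1,\ldots,k.
\]
Because each $\ell_i>0$, a configuration $(\theta_1,\ldots,\theta_k)$ is critical if and only if $\cos\theta_i=0$ for every $i$, i.e.\ $\theta_i\in\{\tfrac{\pi}{2},-\tfrac{\pi}{2}\}$ for every $i$. Setting $J=\{i:\theta_i=\tfrac{\pi}{2}\}\subseteq\{1,\ldots,k\}$ gives exactly the configuration $u_J$ from the notation introduced in Section~\ref{examples}. This identifies the critical set of $s_2$ with $\{u_J:J\subseteq\{1,\ldots,k\}\}$.

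Next I would verify non-degeneracy. Since the partial derivatives only depend on their own coordinate, the Hessian is diagonal:
\[
H_{s_2}=\operatorname{diag}(-\ell_1\sin\theta_1,\ldots,-\ell_k\sin\theta_k).
\]
At the critical point $u_J$ we have $\sin\theta_i=\pm 1$, so each diagonal entry equals $\pm\ell_i$, which is nonzero. Hence $\det H_{s_2}(u_J)=\pm\prod_{i=1}^{k}\ell_i\neq 0$, so every critical point is non-degenerate. This shows $s_2$ is a Morse function whose critical set is precisely $\{u_J\}_{J\subseteq\{1,\ldots,k\}}$, completing the proof.

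There is no real obstacle here: the only subtle point is the choice of coordinates on the torus, which is the standard one, and the calculation is straightforward once one recognizes that $s_2$ separates as a sum of functions of single angular coordinates. The Morse index of $u_J$, which will matter later, can be read off at the same time as $|J^c|$ (the number of negative diagonal entries), though that is not needed for the statement above.
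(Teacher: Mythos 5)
Your proof is correct and follows essentially the same route as the paper: compute $\partial s_2/\partial\theta_i = \ell_i\cos\theta_i$, deduce that criticality forces $\theta_i = \pm\tfrac{\pi}{2}$ for all $i$ (identifying the critical set with $\{u_J\}$), then observe the Hessian is the diagonal matrix $\operatorname{diag}(-\ell_1\sin\theta_1,\ldots,-\ell_k\sin\theta_k)$, whose determinant is nonzero at every $u_J$ since $\sin\theta_i = \pm 1$ there. The paper does the same computation, phrasing the differential as a $1\times k$ matrix of rank $<1$ rather than as vanishing partials, but the substance is identical.
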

\begin{proof}
	% set up with differential
	Recall from \underline{Equation ~\ref{eq:last_position}} that we defined $s_2$ as 
	\[ 
		s_2 = \sum_{i = 1}^k \ell_i \sin\theta_i.	
	\] 
	Note, we have 
	\begin{equation}
		Ds_2(\theta_1, \theta_2, \ldots, \theta_k) = 
			\begin{bmatrix} 
				\ell_1 \cos\theta_1 & \ell_2 \cos\theta_2 & \cdots & \ell_k \cos\theta_k 
			\end{bmatrix}.
	\end{equation}
	Since the codomain of $s_2$ is $\mathbb{R}$, 
	the critical points will cause $Ds_2$ to have rank $< 1$. 
	In such a case, every cell must be 0. 
	Since each $\ell_i$ is positive, we know $\cos\theta_i = 0$ for all $i = 1, \ldots, k$. 
	Thus, $(\theta_1, \ldots, \theta_k)$ is a critical point 
	if and only if $\theta_i = \pm\frac\pi2$ for all $i$. 
	
	% non-degenerate
	Then we have the Hessian matrix
	\begin{equation}
		Hs_2(\theta_1, \ldots, \theta_k) = 
			\begin{bmatrix}
				-\ell_1 \sin\theta_1 & 0 & \cdots & 0 \\
				0 & -\ell_2 \sin\theta_2 & \cdots & 0\\
				\vdots & \vdots & \ddots & \vdots \\
				0 & 0 & \cdots & -\ell_k \sin\theta_k
			\end{bmatrix}
	\end{equation}
	which is diagonal. 
	%% changed "which we can see is diagonal" to "which is diagonal." 
	Then the determinant is 
	\begin{align*}
		 |Hs_2(\theta_1, \ldots, \theta_k)| & = (-\ell_1 \sin\theta_1)\cdots(-\ell_k \sin\theta_k) \\
		 	& = (-1)^k(\ell_1 \cdots \ell_k)(\sin\theta_1 \cdots \sin\theta_k).
	\end{align*}
	We know each $\ell_i$ is positive, 
	so the Hessian is zero only when $\sin\theta_i = 0$ for some $i = 1, \ldots, k$. 
	This requires $\theta_i = 0$ or $\pi$. 
	%% changed "this would require" to "this requires" 
	However, at vertical collinear configurations 
	we have $\theta_i = \frac\pi2$ or $-\frac\pi2$ for all $i = 1, \ldots, k$. 
	Therefore, each critical point is of Morse type. 
\end{proof}

% vJ
In order to work with the supralevel sets, 
we must also prove the same for $-s_2$. 
Let $v_J = (\theta_1, \ldots, \theta_k)$ be the configuration such that 
$\theta_i = -\frac\pi2$ for all $i \in J$, 
and $\theta_i = \frac\pi2$ for all $i\notin J$. 
Thus we see $J = K^c$ if and only if $u_J = v_K$. 
Similarly, $s_2(u_J) = -s_2(v_J)$. 
However, it is helpful to relabel these critical points for $-s_2$
%% changed "it will be helpful" to "it is helpful" 
when computing their index. 

% -s2
\begin{corollary}
	The function $-s_2$ is of Morse type with critical points $v_J$. 
\end{corollary}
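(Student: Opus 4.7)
The plan is to piggy-back on the computation just completed for $s_2$, exploiting the identity $-s_2(\theta_1,\ldots,\theta_k) = -\sum_{i=1}^{k} \ell_i \sin\theta_i$. First I would observe that $D(-s_2) = -Ds_2$, so the two differentials have the same entries up to sign and in particular the same rank at every point of $M(\mathcal{L}_k)$. The critical point equations $\cos\theta_i = 0$ for all $i$ are therefore unchanged, and the critical set is again $\{(\theta_1,\ldots,\theta_k) : \theta_i \in \{\pi/2,-\pi/2\}\}$. Relabeling these $2^k$ points by the subset $J$ of indices where $\theta_i = -\pi/2$ yields exactly the family $\{v_J\}_{J\subseteq\{1,\ldots,k\}}$, with the natural correspondence $v_J = u_{J^c}$ that makes the identity $s_2(u_J) = -s_2(v_J)$ transparent.

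Next I would compute the Hessian $H(-s_2) = -Hs_2$, which is again diagonal with $i$-th entry $\ell_i \sin\theta_i$. At any critical point $\sin\theta_i = \pm 1$ and each $\ell_i > 0$, so every diagonal entry is nonzero and the Hessian has full rank. Hence each $v_J$ is a non-degenerate critical point, which is the content of the corollary.

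There is no real obstacle here; the only thing to watch is the bookkeeping between the two indexings. In particular, the Morse index of $v_J$ as a critical point of $-s_2$ is \emph{not} the Morse index of $u_J$ as a critical point of $s_2$, but rather its complement, since negating a diagonal Hessian swaps positive and negative eigenspaces. This distinction is inert for the present statement, which only asserts Morse-ness and identifies the critical set, but it will be the payoff later when the homology argument combines sublevel sets of $s_2$ with supralevel sets, i.e.\ sublevel sets of $-s_2$, via the Mayer--Vietoris sequence.
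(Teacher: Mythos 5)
Your proof of the corollary itself is correct and mirrors the paper's argument exactly: both observe $D(-s_2) = -Ds_2$ and $H(-s_2) = -Hs_2$, conclude the critical set is unchanged, relabel it via $v_J = u_{J^c}$, and note the diagonal Hessian remains non-degenerate at these points.

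However, your closing remark about the bookkeeping is wrong, and since it is aimed at exactly the downstream Mayer--Vietoris argument, it is worth correcting. You claim the Morse index of $v_J$ for $-s_2$ is the \emph{complement} of the Morse index of $u_J$ for $s_2$. In fact they are equal, both being $|J|$. Your rule ``negating a diagonal Hessian swaps positive and negative eigenspaces'' is true, but it applies \emph{at a fixed point}: the index of $v_J$ for $-s_2$ is the complement of the index of $v_J$ for $s_2$, i.e.\ the complement of $\mathrm{ind}_{s_2}(u_{J^c}) = k - |J|$, which gives $|J|$ again. Concretely, $H(-s_2)$ is diagonal with entries $\ell_i \sin\theta_i$; at $v_J$ we have $\theta_i = -\pi/2$ for $i \in J$, so the $i$-th entry is $-\ell_i < 0$ precisely when $i \in J$, giving index $|J|$. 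The whole purpose of introducing $v_J$ rather than reusing $u_J$ is to make the two indexings agree as a function of $|J|$, so that both $H_j(F_h)$ and $H_j(G_h)$ are counted by subsets of cardinality $j$; your stated version would break that bookkeeping.
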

\begin{proof}
	First, we see $-Ds_2 = D(-s_2)$. 
	So $(\theta_1, \ldots, \theta_k)$ is a critical point
	if and only if $\theta_i = \pm\frac\pi2$ for all $i = 1, \ldots, k$. 
	Similarly, $-Hs_2 = H(-s_2)$. 
	So each vertical collinear configuration is non-degenerate. 
\end{proof}

Now that we know the Morse Lemma applies
to both $s_2$ and $-s_2$,
we show \underline{Lemma~\ref{farber4}} and 
\underline{Lemma~\ref{farber5}} apply as well.
We say $h$ is a regular value 
if there is no $J \subseteq \{ 1, \ldots, k \}$
such that $h = s_2(u_J)$, 
or equivalently,
no $J$ such that $h = -s_2(v_J)$. 
We define the sublevel sets
\begin{gather}\label{eq:sublevel}
	F_h = s_2^{-1}(-\infty, h] \\
	G_h = (-s_2)^{-1}(-\infty, -h].
\end{gather}
Observe $F_h$ is a supralevel set of $-s_2$, 
and $G_h$ is a supralevel set of $s_2$. 

Before moving to the next result, 
we must define our submanifolds 
which will induce the bases for our homology. 
%% changed "which will become bases" to "which induce the bases" 
Let 
\begin{gather}
	S_J = \{ (\theta_1, \ldots, \theta_k) \in M(\mathcal L_k) 
	%% changed [u_J] to S_J
		: \theta_i = -\frac\pi2 \quad \forall i\notin J \} \\
	T_J = \{ (\theta_1, \ldots, \theta_k) \in M(\mathcal L_k) 
	%% changed [v_J] to T_J
		: \theta_i = \frac\pi2 \quad \forall i\notin J \}. 
\end{gather}
Immediately we see $S_J \cong T_J \cong \mathbb T^{|J|}$
%% changed [u_J] to S_J and [v_J] to T_J
because each edge $(\nu_{i - 1}, \nu_i)$ with $i \in J$ 
is rotating freely. 
Thus, $\dim S_J = \dim T_J = |J|$.

% level sets s2
\begin{proposition}\label{prop:levels2}
	Suppose $h$ is a regular value.
	Then each homology group $H_j(F_h)$
	is free abelian with rank equal
	the number of sets $J\subseteq \{ 1, \ldots, k \}$
	such that $|J| = j$
	and $s_2(u_J) \leq h$. 
	Further, the submanifolds $\{ [S_J] \}$ 
	form a basis for $H_j(F_h)$. 
\end{proposition}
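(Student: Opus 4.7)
The plan is to apply \underline{Lemma~\ref{farber4}} and \underline{Lemma~\ref{farber5}} to $F_h$, using a rescaled version of $s_2$ as the Morse function and a suitable reflection as the involution. Since $h$ is a regular value of $s_2$, the sublevel set $F_h \subseteq M(\mathcal L_k) \cong \mathbb T^k$ is a smooth compact codimension-$0$ submanifold with boundary $\partial F_h = s_2^{-1}(h)$. An affine rescaling of $s_2$ produces a Morse function $f: F_h \to [0,1]$ with $f^{-1}(1) = \partial F_h$ and with $1$ still a regular value.

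The correct involution is $\tau: M(\mathcal L_k) \to M(\mathcal L_k)$ given by $\tau(\theta_1,\ldots,\theta_k) = (\pi - \theta_1,\ldots,\pi - \theta_k)$, i.e., reflection over the $y$-axis on each factor. Because $\sin(\pi - \theta) = \sin \theta$, this involution preserves $s_2$ and therefore $F_h$ and $f$; its fixed points are precisely the configurations with $\theta_i \in \{\pi/2, -\pi/2\}$ for every $i$, that is, the $u_J$. By the preceding proposition these are also the critical points of $s_2$, hence of $f$ when $s_2(u_J) < h$. The Hessian of $s_2$ at $u_J$ is diagonal with entries $-\ell_i \sin\theta_i$, negative exactly when $i \in J$, so the Morse index of $u_J$ is $|J|$. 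Connectivity of $F_h$ follows from the fact that $u_\emptyset$ is the unique critical point of index $0$; the trivial cases $F_h = \emptyset$ and $F_h = M(\mathcal L_k)$ are handled respectively by a vacuous count and by the closed-manifold extension noted in \underline{Section~\ref{homology}}. Applying \underline{Lemma~\ref{farber4}}, $H_j(F_h)$ is free abelian of rank equal to the number of $J$ with $|J| = j$ and $s_2(u_J) \leq h$.

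For the basis, I would apply \underline{Lemma~\ref{farber5}} taking $X_{u_J} := S_J$. The submanifold $S_J \cong \mathbb T^{|J|}$ is closed, smooth, of dimension $|J| = \mathrm{ind}(u_J)$, and contains $u_J$. Since $\pi - (-\pi/2) \equiv -\pi/2 \pmod{2\pi}$, the defining constraint of $S_J$ is preserved by $\tau$, so $\tau(S_J) = S_J$. On $S_J$ the function $s_2$ reduces to $\sum_{i \in J}\ell_i \sin\theta_i - \sum_{i \notin J}\ell_i$, which is strictly maximized at $u_J$; this verifies property (2) and also shows $S_J \subseteq F_h$ since $s_2(u_J) \leq h$. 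The Hessian of this restriction is diagonal in the free coordinates $\{\theta_i : i \in J\}$, so $f|_{S_J}$ is Morse with critical points exactly the $u_K$ for $K \subseteq J$, matching the $\tau$-fixed points in $S_J$, and its Morse indices at these $u_K$ coincide with those computed in $F_h$ because both equal the number of $i$ with $\theta_i = \pi/2$. \underline{Lemma~\ref{farber5}} then delivers $\{[S_J]\}$ as a free basis of $H_*(F_h)$ with $[S_J]$ in degree $|J|$.

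The main obstacle I foresee is simultaneously verifying the four conditions of \underline{Lemma~\ref{farber5}} for every $S_J$, especially the index-matching at the secondary critical points $u_K \in S_J$ with $K \subsetneq J$; the diagonal form of the Hessian in the angular coordinates makes this direct, but it relies on the ambient coordinates restricting cleanly to coordinates on $S_J$ near each $u_K$, and one must be careful that each such $u_K$ actually lies in $F_h$, which follows from $s_2(u_K) < s_2(u_J) \leq h$.
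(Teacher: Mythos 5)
Your proof is correct and follows the same strategy as the paper: the same involution $\tau(\theta_1,\ldots,\theta_k)=(\pi-\theta_1,\ldots,\pi-\theta_k)$, the same rescaling to get a Morse function $f\colon F_h\to[0,1]$ with $f^{-1}(1)=\partial F_h$, the same submanifolds $X_{u_J}=S_J$, and the same two lemmas of Farber--Sch\"utz. The one genuine difference is how the Morse index $\mathrm{ind}(u_J)=|J|$ is established: you read it off directly from the diagonal Hessian with entries $-\ell_i\sin\theta_i$ (negative precisely for $i\in J$), which is shorter and exploits the Morse-type proposition already proved; the paper instead identifies the stable submanifold of $u_J$ under the positive gradient flow as $S_J\setminus\bigcup_{J'\subsetneq J}S_{J'}$, an open disk of dimension $|J|$, and reads the index off the stable manifold. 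Both are valid; your Hessian count is the more elementary route, and you are additionally slightly more careful than the paper in addressing the connectedness hypothesis of Lemma~\ref{farber5} and the boundary cases $F_h=\emptyset$ and $F_h=M(\mathcal L_k)$, which the paper leaves implicit.
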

\begin{proof}
	% farber4
	Define $\tau: F_h\to F_h$ such that
	\begin{equation}\label{eq:tau}
		\tau(\theta_1, \ldots, \theta_k) = (\pi - \theta_1, \ldots, \pi - \theta_k).
	\end{equation}
	Observe this is a reflection over the y-axis.
	%% changed "which can be visualized as a reflection over the y-axis"
	%% to "Observe this is a reflection over the y-axis." 
	We see $\tau$ is $s_2$ invariant by the following
	\begin{align*}
		s_2(\tau(\theta_1, \ldots, \theta_k)) & = s_2(\pi - \theta_1, \ldots, \pi - \theta_k) \\
			& = \sum_{i = 1}^k \ell_i \sin(\pi - \theta_i) \\
			& = \sum_{i = 1}^k \ell_i (\sin\pi \cos\theta_i - \sin\theta_i \cos\pi) \\
			& = \sum_{i = 1}^k \ell_i \sin\theta_i \\
			& = s_2(\theta_1, \ldots, \theta_k).
	\end{align*}
	Note $\pi - \frac\pi2 = \frac\pi2$ and $\pi - (-\frac\pi2) = 3\frac\pi2 = -\frac\pi2$. 
	Thus, the fixed points of $\tau$ are the vertical collinear configurations, which are 
	the critical points of $s_2$. 
	Let $f: [-1, h]\to[0, 1]$ be a linear map 
	such that $f(-1) = 0$ and $f(h) = 1$. 
	Since, $\partial(F_h) = s_2^{-1}(h)$,
	we see $\partial(F_h) = (f\circ s_2)^{-1}(1)$, 
	%% changed "we get that" to "we see" 
	and $f(h) = 1$ is a regular value. 
	Thus, $\tau$ and $s_2$ satisfy the requirements of \underline{Lemma~\ref{farber4}}. 
	
	% farber5
	To satisfy the requirements of \underline{Lemma~\ref{farber5}}, 
	we first see that $S_J$ is $\tau$ invariant 
	%% changed [u_J] to S_J
	because each $\theta_i = -\frac\pi2$ is mapped to itself.
	Second, $u_J \in S_J $ with 
	%% changed [u_J] to S_J
	\begin{equation}
		s_2(u_J) = \sum_{i \in J} \ell_i - \sum_{i \notin J} \ell_i.  
	\end{equation}
	Then for any $x\in S_J$, we have 
	%% changed [u_J] to S_J
	\begin{equation}
		s_2(x) = \sum_{i \in J} \ell_i \sin\theta_i - \sum_{i \notin J} \ell_i 
	\end{equation}
	which reaches its maximum if and only if $x = u_J$. 
	We already know $\dim S_J = |J|$, 
	%% changed [u_J] to S_J
	and we claim $\text{ind}(u_J) = |J|$ as well. 
	
	%index 
	We choose the gradient 
	\begin{equation}
		\nabla s_2 = \begin{pmatrix} \ell_1 \cos\theta_1 \\ \vdots \\ \ell_k \cos\theta_k \end{pmatrix} 
	\end{equation}
	so that $\nabla s_2 (x) = 0$ if and only if $x$ is a vertical critical point, 
	and $ds_2(\nabla s_2) > 0$ away from a critical point 
	(see the proof of \underline{Lemma~\ref{def_retract}}). 
	While we will not define the flow lines $\phi_t$ of $\nabla s_2$ in closed form, 
	it suffices to say that for any $\theta_i \neq \pm\frac\pi2$ 
	in a configuration $\theta = (\theta_1, \ldots, \theta_k)$, 
	we have 
	\begin{equation}
		\lim_{t\to\infty} (p_i \circ \phi_t) (\theta) = \frac\pi2 
			\qquad \text{ and } \qquad 
		\lim_{t\to-\infty} (p_i \circ \phi_t) (\theta) = -\frac\pi2. 
	\end{equation}
	where $p_i$ is the projection on to the $i^\text{th}$ factor $\mathbb S^1$. 
	Let $\theta_J = (\theta_i, \ldots, \theta_k)$ be a configuration such that
	$\theta_i = -\frac\pi2$ if $i \notin J$ and $\theta_i \neq -\frac\pi2$ if $i \in J$. 
	So, 
	\begin{equation}
		\lim_{t\to\infty} \phi_t(\theta_J) = u_J. 
	\end{equation}
	For any $\theta_{J^\prime}$ where $J \neq J^\prime$, we have 
	%% changed "we would have" to "we have" 
	\begin{equation}
		\lim_{t\to\infty} \phi_t(\theta_{J^\prime}) = u_{J^\prime} \neq u_J. 
	\end{equation}
	Then, the stable submanifold (see \underline{Equation~\ref{eq:stable}}) of $u_J$ 
	\begin{equation}
		\{ \theta_J \} = S_J - \bigcup_{J^\prime \subset J} S_{J^\prime}
		%% changed [u_J] to S_J
	\end{equation}
	is homeomorphic to an open disk of dimension $|J|$. 
	Thus, we have $\text{ind}(u_J) = \dim S_J = |J|$. 
		%% changed [u_J] to S_J
	This holds for any restriction $s_2|_{S_J}$ because $S_{J^\prime} \subseteq S_J$ 
		%% changed [u_J] to S_J
	for any $J^\prime \subseteq J$. 
\end{proof}

% inclusions on s2
\begin{remark}\label{remark1}
	Therefore, the inclusions of $S_J$ induce isomorphisms
		%% changed [u_J] to S_J
	\begin{equation}
		\bigoplus_{|J| = j} H_j (S_J) \to H_j (M(\mathcal{L}_k)) 
		%% changed [u_J] to S_J
	\end{equation}
	for any $j = 0, \ldots, k$. 
	Note that there are ${n \choose j}$ subsets $|J| = j$. 
	Then $H_j(S_J) \approx \mathbb{Z}$ for any $|J| = j$. 
	Observe we have proved $H_j(\mathcal{L}_k)$ is free abelian with rank ${ k \choose j}$, 
	though this is already known since it is homeomorphic to a k-torus. 
	%% changed from "While we already knew this, 
	%%%      we have proved $H_j(\mathcal{L}_k)$ is free abelian with rank ${ k \choose j}$. "
	%% to "Observe we have proved $H_j(\mathcal{L}_k)$ is free abelian with rank ${ k \choose j}$, 
	%%%     though this is already known since it is homeomorphic with a k-torus. 
	However, the strength of \underline{Lemma~\ref{farber4}} 
	%% deleted "true" 
	and \underline{Lemma~\ref{farber5}}
	lies in the ability to construct a similar isomorphism for sublevel sets of $s_2$. 
	%% changed "is that we can construct "
	%% to "lies in the ability to construct" 
	For any regular value $h \in [-1, 1]$, we define 
	\begin{equation}\label{eq:Uj}
		\mathcal{U}_j^h = \Big\{ J \subseteq \{1, \ldots, k \} : s_2(u_J) \leq h, \quad  |J|=j\Big\} 
	\end{equation}
	which induce the isomorphisms 
	%% changed "from which we get the isomorphisms"
	%% to "which induce the isomorphisms"
	\begin{equation}
		\bigoplus_{J \in \mathcal{U}_j^h} H_j(S_J) \to H_j(F_h) 
		%% changed [u_J] to S_J
	\end{equation}
	showing the integral homology groups of $F_h = s_2^{-1}[-1, h]$ are free abelian with rank $|\mathcal{U}_j^h|$. 
	We construct the injective map 
	%% changed "we can then construct" to "we construct" 
	\begin{equation}\label{eq:injectiveU}
		H_j(F_h) \to H_j (M(\mathcal{L}_k)) 
	\end{equation}
	which maps $[S_J]\to [S_J]$. 
		%% changed [u_J] to [S_J]
	This is all the information we need on the sublevel sets of $s_2$. 
	%% changed from "This gives us all the information we need ... "
	%% to "This is all the information we need ... " 
\end{remark}

% level sets -s2
\begin{corollary}\label{prop:level-s2}
	Suppose $h$ is a regular value.
	Then each homology group $H_j(G_h)$
	is free abelian with rank equal
	the number of sets $J\subseteq \{ 1, \ldots, k \}$
	such that $|J| = j$
	and $-s_2(v_J) \leq -h$. 
	Further, the submanifolds $\{ [T_J] \}$ 
	%% changed [v_J] to [T_J]
	form a basis for $H_j(G_h)$. 
\end{corollary}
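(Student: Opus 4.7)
The strategy is to mirror the proof of Proposition \ref{prop:levels2} with $s_2$ replaced by $-s_2$, the critical points $u_J$ replaced by $v_J$, and the submanifolds $S_J$ replaced by $T_J$. The same involution $\tau(\theta_1, \ldots, \theta_k) = (\pi - \theta_1, \ldots, \pi - \theta_k)$ used previously remains available: because it was shown to be $s_2$-invariant it is also $(-s_2)$-invariant, and its fixed points (the vertical collinear configurations) coincide with the critical points of $-s_2$ by the corollary immediately preceding the statement. Precomposing $-s_2: G_h \to [-1, -h]$ with a linear rescaling to $[0,1]$ sending $-h \mapsto 1$, and using that $\partial G_h = (-s_2)^{-1}(-h)$ with $-h$ a regular value, fulfills the hypotheses of Lemma \ref{farber4}.

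To invoke Lemma \ref{farber5} at each critical point $v_J \in G_h$, I would verify its four requirements for $T_J$. First, $\tau$-invariance: for $i \notin J$ the fixed angle $\theta_i = \pi/2$ is sent to itself under $\theta_i \mapsto \pi - \theta_i$, and the remaining coordinates rotate freely, so $\tau(T_J) = T_J$. Second, one computes
\[
-s_2(x) = -\sum_{i \in J} \ell_i \sin\theta_i - \sum_{i \notin J} \ell_i
\]
for $x \in T_J$, which is strictly maximized at $v_J$ (where $\sin\theta_i = -1$ for each $i \in J$). Third and fourth, the restriction $-s_2|_{T_J}$ is Morse with critical points $\{v_{J'} : J' \subseteq J\}$, and the index of $v_J$ under $-s_2$ matches $\dim T_J = |J|$.

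The index computation is the only nontrivial bookkeeping: the Hessian of $-s_2$ is diagonal with entries $\ell_i \sin\theta_i$, so at $v_J$ it takes the value $-\ell_i$ for $i \in J$ and $+\ell_i$ for $i \notin J$, giving $\textrm{ind}(v_J) = |J|$. A gradient-flow argument, analogous to the one using $\nabla s_2$ in Proposition \ref{prop:levels2}, then identifies the stable submanifold of $v_J$ (now under the flow of $-\nabla s_2$) with $T_J \setminus \bigcup_{J' \subsetneq J} T_{J'}$, an open disk of dimension $|J|$; the same index value persists under the restriction to $T_J$. With Lemma \ref{farber5} in hand, the classes $\{[T_J] : v_J \in G_h\}$, equivalently $\{[T_J] : -s_2(v_J) \leq -h\}$, form a free basis of $H_*(G_h)$, and collecting those with $|J| = j$ yields the claimed rank of $H_j(G_h)$.

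The main pitfall is purely bookkeeping: one must confirm that $\textrm{ind}(v_J) = |J|$ rather than $k - |J|$, despite the definition of $v_J$ placing $\theta_i = -\pi/2$ at the indices in $J$. This is why the change of sign in passing from $s_2$ to $-s_2$ pairs naturally with the exchange of the roles of $\pi/2$ and $-\pi/2$ in the definitions of $u_J$ and $v_J$, and of $S_J$ and $T_J$. Everything else is a formal consequence of Proposition \ref{prop:levels2} applied to $-s_2$.
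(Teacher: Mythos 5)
Your proposal is correct and takes essentially the same approach as the paper: the paper states the corollary follows directly from Proposition~\ref{prop:levels2} by symmetry (replacing $s_2$ with $-s_2$, $u_J$ with $v_J$, and $S_J$ with $T_J$), and you have simply spelled out that symmetry in full detail, including the index bookkeeping $\mathrm{ind}(v_J) = |J|$ that justifies the substitution.
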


This follows directly from \underline{Proposition~\ref{prop:levels2}} 
and does not require its own proof. 

% inclusions on -s2
\begin{remark}\label{remark2}
	The inclusions induce isomorphisms 
	\begin{equation}
		\bigoplus_{|J| = j} H_j (T_J) \to H_j (M(\mathcal{L}_k)) 
		%% changed [v_J] to T_J
	\end{equation}
	for any $j = 0, \ldots, k$. 
	For any regular value $h\in [-1, 1]$, we define 
	\begin{equation}\label{eq:Vj}
		\mathcal{V}_j^h = \Big\{ J \subseteq \{1, \ldots, k \} : -s_2(v_J) \leq -h, \quad  |J|=j\Big\} 
	\end{equation}
	which induce the isomorphisms 
	%% changed "from which we get the isomorphisms" 
	%% to "which induce the isomorphisms" 
	\begin{equation}
		\bigoplus_{J \in \mathcal{V}_j^h} H_j(T_J) \to H_j(G_h) 
		%% changed [v_J] to T_J
	\end{equation}
	showing the integral homology groups of $G_h = (-s_2)^{-1}[-1, -h]$ are free abelian with rank $|\mathcal{V}_j^h|$. 
	We construct an injective map (similar to \underline{Equation~\ref{eq:injectiveU}})
	%% changed "we then get an injective map" to "we construct an injective map" 
	\begin{equation}\label{eq:injectiveV}
		H_j(G_h) \to H_j (M(\mathcal{L}_k)) 
	\end{equation}
	which maps $[T_J]\to [T_J]$. 
	%% changed [v_J] to [T_J]
	This is all the information we need on the supralevel sets of $s_2$.
	%% changed from "This gives us all the information we need ... "
	%% to "This is all the information we need ... " 
\end{remark}

% deformation retraction application
\begin{remark}
	Note $M(\mathcal{L}_k) \cong \mathbb T^k$ is a smooth, compact, oriented manifold.
	%% deleted "that" 
	Because $s_2$ is a Morse function,
	there exists a $\delta > 0$ such that $s_2^{-1}[h - \delta, h + \delta]$ 
	deformation retracts onto $s_2^{-1}(h)$ for all $h\in \mathbb{R}$
	by \underline{Lemma~\ref{def_retract}}. 
	In this case, $\delta$ small enough so that there are no
	%% change "we'd have a \delta small enough such that" to "\delta is small enough so that" 
	critical values in $[h - \delta, h) \cup (h, h + \delta]$. 
	%% changed "critical values in $[h - \delta, h + \delta]$ other than maybe $h$ itself. "
	%% to "critical values in $[h - \delta, h) \cup (h, h + \delta]$."
	Thus, we see $F_{h + \delta} \cap G_{h - \delta}$ 
	deformation retracts onto $A_h$. 

	Let $F = F_{h + \delta}$ and $G = G_{h - \delta}$. 
	Then, we construct the following Mayer-Vietoris sequence:
	%% changed "we can construct" to "we construct" 

	\begin{tikzpicture}[descr/.style={fill=white,inner sep=1.5pt}]
     	   \matrix (m) [
	            matrix of math nodes,
	            row sep= 1em,
	            column sep=3em,
	            text height=1.5ex, text depth=0.25ex
	        ]
	        { 0	& H_k(A_h) 			
	        			& H_k(G) \oplus H_k(F) 				
				& H_k(M(\mathcal{L}_k)) 	& 	\\
	             	& H_{k-1}(A_h) 		
				& H_{k-1}(G) \oplus H_{k-1}(F) 		
				& H_{k-1}(M(\mathcal{L}_k)) 	& 	\\
	             	& \mbox{}         &                & \mbox{}  			&  	\\
	             	& H_0(A_h) 			
				& H_0(G) \oplus H_0(F) 				
				& H_0(M(\mathcal{L}_k)) 	& 0	\\
	        };
	
	        \path[overlay,->, font=\scriptsize,>=latex]
	        (m-1-1) edge (m-1-2)
	        (m-1-2) edge (m-1-3)
	        (m-1-3) edge node[descr,yshift=0.3ex] {$\psi^k$} (m-1-4)
	        (m-1-4) edge[out=355,in=175] (m-2-2)
	        (m-2-2) edge (m-2-3)
	        (m-2-3) edge node[descr,yshift=0.3ex] {$\psi^{k - 1}$} (m-2-4)
	        (m-2-4) edge[out=355,in=175,dashed] (m-4-2)
	        (m-4-2) edge (m-4-3)
	        (m-4-3) edge node[descr,yshift=0.3ex] {$\psi^0$} (m-4-4)
	        (m-4-4) edge (m-4-5);
	\end{tikzpicture}
	
	from which we produce the short exact sequences
	%% changed from "we can produce" to "we produce" 
	\begin{equation}
		0 \to \text{coker } \psi_{j + 1} \to H_j(A_h) \to \ker \psi_j \to 0. 
	\end{equation}
	We know $H_*(G) \oplus H_*(F)$ is free, 
	so $\ker \psi_*$ is free. 
	Then, this short exact sequence splits 
	\begin{equation}\label{eq:split}
		H_j(A_h) \approx \text{coker } \psi_{j + 1} \oplus \ker \psi_j. 
	\end{equation}
	As such, we need only investigate $\psi_*$. 
\end{remark}

Using the notation from \underline{Equation~\ref{eq:Uj}}
and \underline{Equation~\ref{eq:Vj}}, 
let $\mathcal U_j = \mathcal U_j^{h + \delta}$
and let $\mathcal V_j = \mathcal V_j^{h - \delta}$. 

% psi
\begin{proposition}\label{prop:ker}
	The kernel $\ker \psi_j = \Big\langle ([T_K], -[S_J]) : J = K \Big\rangle$
	%% changed [v_K] to [T_K], and changed -[u_J] to -[S_J]
	is free abelian with rank
	\begin{equation}\label{eq:kernel_rank}
		|\mathcal V_j \cap \mathcal U_j|.
	\end{equation}
\end{proposition}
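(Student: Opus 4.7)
The plan is to compute $\ker\psi_j$ by reducing it to linear algebra once the two bases of $H_j(M(\mathcal{L}_k))$ furnished by \underline{Remark~\ref{remark1}} and \underline{Remark~\ref{remark2}} are identified with one another. The crucial observation is that for every $J$ with $|J|=j$, the two subtori $S_J$ and $T_J$ of $M(\mathcal{L}_k)\cong\mathbb{T}^k$ represent the same homology class, so Mayer--Vietoris collapses to an explicit matrix problem.

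First I would establish the identity $[S_J]=[T_J]$ in $H_j(M(\mathcal{L}_k))$. Both $S_J$ and $T_J$ are embedded copies of $\mathbb{T}^{|J|}$ obtained by freeing the coordinates $\theta_i$ with $i\in J$ and fixing the remaining coordinates at $-\pi/2$ or $\pi/2$, respectively. Because $M(\mathcal{L}_k)$ imposes no further constraint, I can define an ambient homotopy $H\colon S_J\times[0,1]\to M(\mathcal{L}_k)$ which fixes the free coordinates and continuously rotates each $\theta_i$ with $i\notin J$ from $-\pi/2$ to $\pi/2$. At $t=0$ this recovers the inclusion $S_J\hookrightarrow M(\mathcal{L}_k)$, while at $t=1$ it is the inclusion $T_J\hookrightarrow M(\mathcal{L}_k)$ precomposed with the coordinate-preserving homeomorphism $S_J\cong T_J$. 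Since that latter homeomorphism identifies fundamental classes, the two inclusions induce the same map on $H_j$, yielding $[S_J]=[T_J]$.

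With this identification I can write $\psi_j$ in the common basis $\{[S_L]\}_{|L|=j}$ of $H_j(M(\mathcal{L}_k))$. Under the inclusion $G\hookrightarrow M(\mathcal{L}_k)$, a basis element $[T_K]$ with $K\in\mathcal{V}_j$ is sent to $[T_K]=[S_K]$; under $F\hookrightarrow M(\mathcal{L}_k)$, a basis element $[S_J]$ with $J\in\mathcal{U}_j$ is sent to $[S_J]$. A general element of $H_j(G)\oplus H_j(F)$ expressed in the form $\alpha=\sum_{K\in\mathcal{V}_j}a_K[T_K]+\sum_{J\in\mathcal{U}_j}b_J(-[S_J])$ therefore satisfies
\[
    \psi_j(\alpha)=\sum_{K\in\mathcal{V}_j}a_K[S_K]-\sum_{J\in\mathcal{U}_j}b_J[S_J].
\]
Since $\{[S_L]\}_{|L|=j}$ is a free basis, $\psi_j(\alpha)=0$ forces the coefficient of each $[S_L]$ to vanish. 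A case split on whether $L$ lies in $\mathcal{V}_j\setminus\mathcal{U}_j$, $\mathcal{U}_j\setminus\mathcal{V}_j$, or $\mathcal{U}_j\cap\mathcal{V}_j$ yields $a_L=0$, $b_L=0$, or $a_L=b_L$ respectively. Consequently $\ker\psi_j$ is freely generated by the elements $([T_J],-[S_J])$ with $J\in\mathcal{U}_j\cap\mathcal{V}_j$, and its rank equals $|\mathcal{U}_j\cap\mathcal{V}_j|$.

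The main obstacle is the homological identity $[S_J]=[T_J]$. The geometric picture is evident in $\mathbb{T}^k$, but care is required to check that the orientations attached to $S_J$ and $T_J$ as basis elements (coming from \underline{Lemma~\ref{farber5}} applied to $s_2$ and $-s_2$, respectively) are compatible under the homotopy $H$. Once this sign issue is settled, the remainder is a transparent calculation in a finitely generated free abelian group.
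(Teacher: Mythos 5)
Your proposal is correct and follows essentially the same route as the paper: both establish $[S_J]=[T_J]$ in $H_j(M(\mathcal{L}_k))$ by exhibiting an explicit homotopy in the torus (the paper translates all coordinates by $\pi t$, you rotate only the fixed coordinates $\theta_i$, $i\notin J$, from $-\pi/2$ to $\pi/2$ --- both work) and then read off $\ker\psi_j$ in the common basis $\{[S_L]\}_{|L|=j}$. Your coefficient-by-coefficient case split is a slightly more explicit version of the paper's remark that the two free bases ``can have no other relationships,'' and the orientation caveat you raise is a reasonable one to flag, though it does not affect the rank count.
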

\begin{proof}
	Consider the family of functions $w_t: M(\mathcal L_k)\to M(\mathcal L_k)$,
	$t\in [0, 1]$, where
	\begin{equation}
		w_t(\theta_1, \ldots, \theta_k) = (\theta_1 + \pi t, \ldots, \theta_k + \pi t).
	\end{equation}
	Then for any $J \subseteq \{ 1, \ldots, k \}$, 
	we have $w_0(S_J) = S_J$
	%% changed [u_J] to S_J
	and $w_1(S_J) = T_J$. 
	%% changed [u_J] to S_J, changed [v_J] to T_J
	Thus $[S_J] = [T_J]$, 
	%% changed [u_J] to [S_J], changed [v_J] to [T_J]
	and $w_t$ induces a change of basis map
	on $H_*(M(\mathcal L_k))$. 
	Because both $\{[S_J]\}$ and $\{ [T_J] \}$
	are free bases, 
	they can have no other relationships. 
	
	Given $|J| = |K| = j$, 
	we know $\psi_j([T_K], [S_J]) = [T_K] + [S_J]$
	from the construction of the Mayer-Vietoris sequence. 
	Then, $\psi_j([T_J], -[S_J]) = 0$ 
	when $T_J\subseteq G$ 
	and $S_J\subseteq F$. 
	This only happens when $J \in \mathcal V_j \cap U_j$. 
\end{proof}

With this result, we compute the cokernel. 
%% changed "we can immediately compute" 
%% to "we compute " 

%cokernel
\begin{corollary}\label{cor:coker}
	The cokernel $\emph{coker}\psi_j = H_j( \mathbb T^k) / \text{im} \psi_j $
	is free abelian with rank
	\begin{equation}\label{eq:cokernel_rank}
		{k \choose j} - |\mathcal V_j \cup \mathcal U_j| = |(\mathcal V_j \cup \mathcal U_j)^c|. 
	\end{equation}
\end{corollary}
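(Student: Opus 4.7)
The plan is to exploit the fact that we already have an explicit free basis of $H_j(M(\mathcal{L}_k))$ and that the image of $\psi_j$ is generated by a distinguished subset of that basis, so the cokernel is automatically free and its rank is a matter of counting. First, recall from Remark~\ref{remark1} and Remark~\ref{remark2} that $H_j(F)$ is free abelian with basis $\{[S_J] : J \in \mathcal{U}_j\}$ and $H_j(G)$ is free abelian with basis $\{[T_J] : J \in \mathcal{V}_j\}$, and that under the inclusions into $M(\mathcal{L}_k)$ these classes map to the corresponding classes $[S_J]$ and $[T_J]$ in $H_j(M(\mathcal{L}_k))$. From the construction of the Mayer--Vietoris map $\psi_j$ in the previous remark, the image of $\psi_j$ is therefore generated inside $H_j(M(\mathcal{L}_k))$ by $\{[T_K] : K \in \mathcal{V}_j\} \cup \{[S_J] : J \in \mathcal{U}_j\}$.

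Next, I would invoke the rotational homotopy $w_t$ used in Proposition~\ref{prop:ker}: since $w_1$ carries $S_J$ to $T_J$ and is homotopic to the identity on $M(\mathcal{L}_k)$, the classes $[T_J]$ and $[S_J]$ coincide in $H_j(M(\mathcal{L}_k))$ for every $J$ with $|J|=j$. Consequently the image of $\psi_j$ is precisely the subgroup
\begin{equation}
    \operatorname{im}\psi_j = \Big\langle [S_J] : J \in \mathcal{V}_j \cup \mathcal{U}_j \Big\rangle \subseteq H_j(M(\mathcal{L}_k)).
\end{equation}

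Finally, because $\{[S_J] : |J| = j\}$ is a free $\mathbb{Z}$-basis of $H_j(M(\mathcal{L}_k))$ of cardinality $\binom{k}{j}$, any subset of this basis spans a direct summand. Hence the quotient $H_j(M(\mathcal{L}_k)) / \operatorname{im}\psi_j$ is free abelian, with a basis given by the complementary classes $\{[S_J] : |J| = j,\ J \notin \mathcal{V}_j \cup \mathcal{U}_j\}$, and counting them yields the rank $\binom{k}{j} - |\mathcal{V}_j \cup \mathcal{U}_j| = |(\mathcal{V}_j \cup \mathcal{U}_j)^c|$. The main obstacle here is ensuring that the image is exactly the span of these specific basis elements and not some subtler subgroup; this reduces entirely to the identification $[T_J] = [S_J]$ inside the ambient torus, which is supplied by the homotopy $w_t$. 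Once that identification is in place, the argument is essentially bookkeeping with a basis of $H_*(\mathbb{T}^k)$.
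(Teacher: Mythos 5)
Your proposal matches the paper's own proof essentially step for step: identify the image of $\psi_j$ via the bases $\{[S_J]\}$ and $\{[T_K]\}$ from Remarks~\ref{remark1} and~\ref{remark2}, use the homotopy $w_t$ from Proposition~\ref{prop:ker} to identify $[S_J]=[T_J]$ in $H_j(M(\mathcal{L}_k))$, conclude $\operatorname{im}\psi_j = \langle [S_J] : J \in \mathcal{U}_j \cup \mathcal{V}_j\rangle$, and read off the rank of the quotient as the count of complementary basis elements. This is correct and follows the same route as the paper.
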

\begin{proof}
	We know $H_j(M(\mathcal L_k))$ is free abelian with rank ${n \choose j}$. 
	From \underline{Proposition~\ref{prop:levels2}} and \underline{Proposition~\ref{prop:level-s2}},
	both $\{ [S_J] : j = |J| \}$ and $\{ [T_J] : j = |J| \}$ are bases. 
	%% changed "we can choose bases .... or ....  " 
	%% to "both .... and .... are bases" 
	%% changed [u_J] to [S_J] and [v_J] to [T_J] 
	From \underline{Remark~\ref{remark1}} and \underline{Remark~\ref{remark2}}
	we know that 
	\begin{equation}
		\text{im}\psi_j = \Big\langle [S_J], [T_K] \Big| J\in \mathcal U_j, K\in \mathcal V_j \Big\rangle
		%% changed [u_J] to [S_J], and changed [v_J] to [T_K] 
	\end{equation}
	the subgroup generated by the bases elements $S_J \subseteq F$ and $T_K \subseteq G$. 
	%% changed [u_J] to S_J, [v_K] to T_J
	Because of \underline{Proposition~\ref{prop:ker}}, 
	we relate the basis elements $[S_J] = [T_J]$. 
	%% changed "we can relate" to "we relate" 
	%% changed [u_J] to [S_J], and [v_J] to [T_J]
	Thus, 
	\begin{equation}
		\text{im}\psi_j = \Big\langle [S_J] \Big| J\in \mathcal U_j \cup \mathcal V_j \Big\rangle.
		%% changed [u_J] to [S_J]
	\end{equation}
	Therefore, the cokernel
	\begin{equation}
		\text{coker}\psi_j =  \Big\langle [S_J] \Big| J\in (\mathcal U_j \cup \mathcal V_j)^c \Big\rangle
		%% changed [u_J] to [S_J]
	\end{equation}
	is generated by the basis elements in neither $F$ nor $G$, 
	and must have rank $|(\mathcal U_j \cup \mathcal V_j)^c|$. 
\end{proof}

These are all the pre-requisites for the last step of the proof. 
%% changed "Now we have all the ingredients of our result, and we jut need to put it all together."
%% to "These are all the pre-requisites for the last step of the proof. 
% counting def
First let $a_j$ be the number of vertical collinear configurations $u_J$ 
such that $|J| = j$ and $s_2(u_J) \leq -|h|$.
Second, let $b_j$ be the number of vertical collinear configurations $u_J$
such that $|J| = j$ and $s_2(u_J) > |h|$. 

%% big theorem!!!
\begin{remark}\label{rmk:horizontal}
	Using \underline{Equation~\ref{eq:split}}, 
	along with \underline{Proposition~\ref{prop:ker}}
	and \underline{Corollary~\ref{cor:coker}}, 
	We see that $H_j(A_h)$ is free abelian with rank 
	%% changed "We get" to "We see" 
	$|\mathcal U_j \cap \mathcal V_j| + |(\mathcal U_{j + 1} \cup \mathcal V_{j + 1})^c|$
	for each $j = 1, \ldots, k$. 
	Observe $\mathcal U_j^{h + \delta} = \mathcal U_j^h$ because there are no critical values
	in the range $(h, h + \delta]$. 
	%% changed [ to (
	Similarly $\mathcal V_j^{h - \delta} = \mathcal V_j^h$. 
	Thus $J \in \mathcal V_j \cap \mathcal U_j$ if and only if $-s_2(v_J) \leq -h$ and $s_2(u_J) \leq h$. 
	This is equivalent to requiring $s_2(u_J) \leq -|h|$
	%% changed "This is exactly the same as" to "This is equivalent to" 
	which is the definition of $a_j$. 
	%% changed "was our" to "is the" 
	Therefore,
	\begin{equation}
		a_j = |\mathcal U_j \cap \mathcal V_j|.
	\end{equation}
	
	Similarly, we see $J \notin \mathcal V_j \cup \mathcal U_j$ if and only if $-s_2(v_J) > -h$ and $s_2(u_J) > h$. 
	This is equivalent to requiring $s_2(u_J) > |h|$,  
	%% changed "is the same as" to "is equivalent to" 
	which is the definition of $b_j$.
	%% changed "was our" to "is the" 
	Thus, 
	\begin{equation}
		b_j = |(\mathcal U_j \cup \mathcal V_j)^c|.
	\end{equation}
	
	Therefore, for any $h\in \mathbb{R}$, each homology group $H_j(A_h)$ is free abelian of rank
	\begin{equation}
		 \beta_j = b_{j + 1} + a_j
	\end{equation}
	proving \underline{Theorem~\ref{bigtheorem1}}. 
\end{remark}

%%%%%%%%
%% proof of theorem 2
%%%%%%%%

\subsection{Proof of \underline{Theorem~\ref{bigtheorem2}}}\label{sec:proof2}

We now consider motion spaces restricted to a smooth curve. 
%% changed "we will now" to "we now" 
We repeat much of the notation defined in \underline{Section~\ref{introduction}} for clarity. 
We begin with the robotic arm $\mathcal L_k$ with edge vector $(\ell_1, \ldots, \ell_k)$. 
Similar to above, we must pay attention to the collinear configurations
admitted by the constriction. 
Recall, we define
\begin{equation} 
	r_J = \sum_{i\in J} \ell_i - \sum_{i\notin J} \ell_i,
	\qquad J\subseteq \{1, \ldots, k \}
\end{equation}
and will refer to circles of a critical radius, meaning a circle centered on the origin 
such that the radius is equal to $|r_J|$, for some $J \subseteq \{1, \ldots, k \}$. 
We let $\gamma: [0, 1]\to \mathbb R^2$ be an embedding of the interval in the plane
such that $| \gamma(0) | = | \gamma(1) | = 1$ 
and all intersections with circles of a critical radius are transverse. 
Next, we need the concept of a multiplier $\mu_J$ for each $J\subseteq \{1, \ldots, k \}$,
which we define as half the number of times $\gamma$ 
%% changed "we'll" to "we" 
intersects the circle with critical radius $r_J$. 
For each $j = 0, 1, \ldots$, we redefine 
%% changed "we'll" to "we" 
\begin{equation}
	a_j = \sum \mu_J
\end{equation}
over all $J \subseteq \{1, \ldots, k \}$ such that $|J| = j$ and $r_J < 0$. 
Such a $J$ is ``short."
Similarly, for each $j = 0, 1, \ldots$, we redefine
%% changed "we'll" to "we" 
\begin{equation}
	b_J = \sum \mu_J
\end{equation}
over all $J \subseteq \{1, \ldots, k \}$ such that $|J| = j$ and $r_J > 0$. 
Such a $J$ is ``long."
Lastly, we define $A_\gamma = M(\mathcal L_k, \gamma)$. 
%% changed \Gamma to A_\gamma
Our goal is to show $H_j(A_\gamma)$ is free abelian with rank $a_j + b_{j + 1}$ for all $j \geq 0$. 
%% changed \Gamma to A_\gamma

Our first inclination of why this is true comes from our work with straight lines. 
Since $\gamma$ is smooth, locally it is homeomorphic to a straight line. 
With these local homeomorphisms, 
we construct local homeomorphisms between $A_\gamma$ 
%% changed \Gamma to A_\gamma
%% changed "we can make construct" to "we construct" 
and some motion space of a line, 
including when $\gamma$ intersects with a circle of critical radius. 
Away from these circles with a critical radius, 
the constricted motion space should be a smooth manifold with boundary. 
We calculate the homology states above by 
%% changed "we will calculate" to "we calculate" 
constructing a Morse function and then applying \underline{Lemma~\ref{farber4}}. 

\begin{remark}\label{rmk:morse_smooth}
	Because $\gamma$ is injective, we define 
	%% changed "we can define" to "we define" 
	$f: A_\gamma \to [0, 1]$ by $f = \gamma^{-1} \circ s$, 
	%% changed \Gamma to A_\gamma
	where $s: A_\gamma\to \mathbb R^2$ is the function defined by \underline{Equation~\ref{eq:last_position}}. 
	%% changed \Gamma to A_\gamma
	Note this is well defined since $\gamma$ is an embedding. 
	This will serve as our Morse function. 
\end{remark}

In order to apply \underline{Lemma~\ref{farber4}}, we must construct an involution
whose fixed points are the same the critical points of $f$. 

\begin{proposition}\label{lem:walker33adapted}
	The critical points of $f$ are the collinear configurations. 
\end{proposition}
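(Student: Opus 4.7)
The plan is to exploit the factorization $f = \gamma^{-1} \circ s$ together with the transversality hypothesis on $\gamma$. Since $\gamma$ is an embedding of an interval into $\mathbb{R}^2$, the inverse $\gamma^{-1}$ is a diffeomorphism from $\mathrm{im}(\gamma)$ to $[0,1]$. Consequently, a point $p \in A_\gamma$ is critical for $f$ if and only if it is critical for $s|_{A_\gamma}: A_\gamma \to \mathrm{im}(\gamma)$, i.e.\ the differential $ds_p$ restricted to $T_pA_\gamma$ vanishes (since the codomain is one-dimensional, this just means $ds_p|_{T_pA_\gamma}$ fails to be surjective).

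First I would dispose of the non-collinear case. If $p\in A_\gamma$ is not collinear, then from the background section we know $\mathrm{rk}(ds_p) = 2$, so $s$ is a submersion at $p$. Hence $A_\gamma = s^{-1}(\mathrm{im}(\gamma))$ is a smooth manifold near $p$ of dimension $k-1$ with
\[
T_pA_\gamma = (ds_p)^{-1}\bigl(T_{s(p)}\mathrm{im}(\gamma)\bigr),
\]
and $ds_p$ maps this subspace surjectively onto $T_{s(p)}\mathrm{im}(\gamma)$. Therefore $df_p \neq 0$ and $p$ is a regular point.

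Next I would analyze the collinear case, which is where the transversality hypothesis plays its key role. At a collinear configuration $p$, a direct computation of the matrix $Ds$ (exactly as in the background section, factoring out the common vector $(\pm\ell_1,\ldots,\pm\ell_k)$) shows that
\[
\mathrm{im}(ds_p) = \mathbb{R}\cdot(-\sin\theta,\cos\theta),
\]
where $\theta = \theta_1$. Since $s(p) = r_J(\cos\theta,\sin\theta)$ lies on the critical circle of radius $|r_J|$, this image is precisely the tangent line to that critical circle at $s(p)$. The transversality hypothesis on $\gamma$ then yields
\[
\mathrm{im}(ds_p) \oplus T_{s(p)}\mathrm{im}(\gamma) = \mathbb{R}^2,
\]
so $\mathrm{im}(ds_p) \cap T_{s(p)}\mathrm{im}(\gamma) = \{0\}$. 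Taking preimages gives $(ds_p)^{-1}\bigl(T_{s(p)}\mathrm{im}(\gamma)\bigr) = \ker(ds_p)$, which has dimension $k-1$. This forces $A_\gamma$ to still be a smooth $(k-1)$-manifold near $p$ with $T_pA_\gamma = \ker(ds_p)$, and therefore $ds_p|_{T_pA_\gamma} = 0$, so $df_p = 0$ and $p$ is critical.

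Combining the two cases proves the claim. The main obstacle I anticipate is the collinear case: one has to argue carefully that, despite $s$ failing to be a submersion there, $A_\gamma$ remains a smooth manifold near $p$ and that its tangent space reduces exactly to $\ker(ds_p)$. The transversality hypothesis is doing all the work — had $\gamma$ been tangent to the critical circle at $s(p)$, the two one-dimensional subspaces would coincide, $T_pA_\gamma$ would be larger than $\ker(ds_p)$, and $A_\gamma$ could fail to be a manifold (matching the non-manifold behavior exhibited by the horizontal line in Example~\ref{ex:k2zero}).
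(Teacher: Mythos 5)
Your proof is correct, and it takes a genuinely different route from the paper. The paper does not give an argument for this proposition at all: it simply defers to Walker's Proposition 3.3, supplemented by the identification $M(\mathcal{L}_k, z) = C(\mathcal{P}_{k+1})$ from Example~\ref{ex:add_edge}, which implicitly treats $A_\gamma$ fiberwise as a family of polygon configuration spaces over $\gamma$. You instead work directly with the endpoint map $s$ and prove the statement by a clean transversality argument: at non-collinear $p$ the map $s$ is a submersion, so $ds_p$ carries $T_pA_\gamma = (ds_p)^{-1}(T_{s(p)}\mathrm{im}\,\gamma)$ onto $T_{s(p)}\mathrm{im}\,\gamma$ and $df_p \neq 0$; at collinear $p$, $\mathrm{im}(ds_p)$ is exactly the tangent line to the circle of critical radius $|r_J|$ at $s(p)$, and the transversality hypothesis forces $\mathrm{im}(ds_p) \cap T_{s(p)}\mathrm{im}\,\gamma = \{0\}$, hence $(ds_p)^{-1}(T_{s(p)}\mathrm{im}\,\gamma) = \ker(ds_p)$ and $df_p = 0$. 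Your approach has two advantages the paper's citation-based treatment lacks: it is self-contained, and it makes explicit that the transversality condition on $\gamma$ is precisely what guarantees both that $A_\gamma$ is a smooth $(k-1)$-manifold near collinear configurations (compare the failure in Example~\ref{ex:k2zero}) and that those configurations are critical for $f$. Two small points worth flagging, though neither is a gap: your identification of $\mathrm{im}(ds_p)$ with the tangent to the critical circle tacitly uses $r_J \neq 0$, which holds because the hypotheses exclude $\gamma$ passing through the origin (no such intersection could be transverse, as the paper notes); and the endpoints $\gamma(0), \gamma(1)$ deserve a word, since there the preimage $s^{-1}(\gamma(0))$ is a single maximally stretched collinear configuration and the transversality preimage theorem must be applied with a little care at the boundary of $[0,1]$ — but the extension of $\gamma$ past those endpoints leaves the disk of radius $\sum\ell_i$, so $A_\gamma$ has no boundary there and your argument goes through unchanged.
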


We will not give a proof for \underline{Proposition~\ref{lem:walker33adapted}} here, 
but it is adapted from Proposition 3.3 in \cite{walker} 
%% changed "it can be" to "it is"
using \underline{Example~\ref{ex:add_edge}}. 
One crucial difference is the calculation of the index. 
For each circle of a critical radius, there are two types of transverse intersections:
those going ``in" and those going ``out." 
We assign this orientation based off the orientation of $\gamma$. 
Because $|\gamma(0)| = |\gamma(1)| = 1$, 
there exists a pairing of the intersections of each type.. 
%% changed "we know that each type of intersection can be pair up"
%% to "there exists a pairing of the intersections of each type" 
This is why each $\mu_J$ is half the number of intersections
with the circle of radius $r_J$. 

Suppose $J \subseteq \{ 1, \ldots, k \}$ is ``long" and $\gamma$ has two intersections with the circle of radius $r_J$. 
From \cite{walker}, one of the associated critical points will have index $|J| - 1$. 
Because we will have the opposite orientation for the other associated critical point, 
its index will be $|K| = n - |J|$, where $K$ and $J$ are complimentary sets. 
Note how this lines up with our redefinition of $a_j$ and $b_j$, 
and that this corresponds with our original definition of $a_j$ and $b_j$ for straight lines. 

Lastly, we compute the homology groups by using \underline{Lemma~\ref{farber4}}. 
%% changed "we will compute" to "we compute" 
Here, our involution $\tau$ will be defined locally. 
Given $x \in \gamma$, 
there exists some angle $\theta$ between a ray from the origin through $x$ and the x-axis,
see \underline{Figure~\ref{fig:involution_angle}}. 
For each configuration $(\theta_1, \ldots, \theta_k)$ 
such that $s(\theta_1, \ldots, \theta_k) = x$, 
we say that
\begin{equation}\label{eq:moving_involution}
	\tau(\theta_1, \ldots, \theta_k) = (2\theta - \theta_1, \ldots, 2\theta - \theta_k)
\end{equation}
meaning that each configuration is reflected over the ray. 
Note that a collinear configuration whose end point is $x$
will have all angles $\theta$ or $\theta + \pi$, 
and as such will be fixed by the involution. 
Furthermore, the end points of each configuration will stay fixed, 
so $f$ is $\tau$ invariant. 

\begin{figure}[h]
	\centering
	\includegraphics[width=0.5\textwidth]{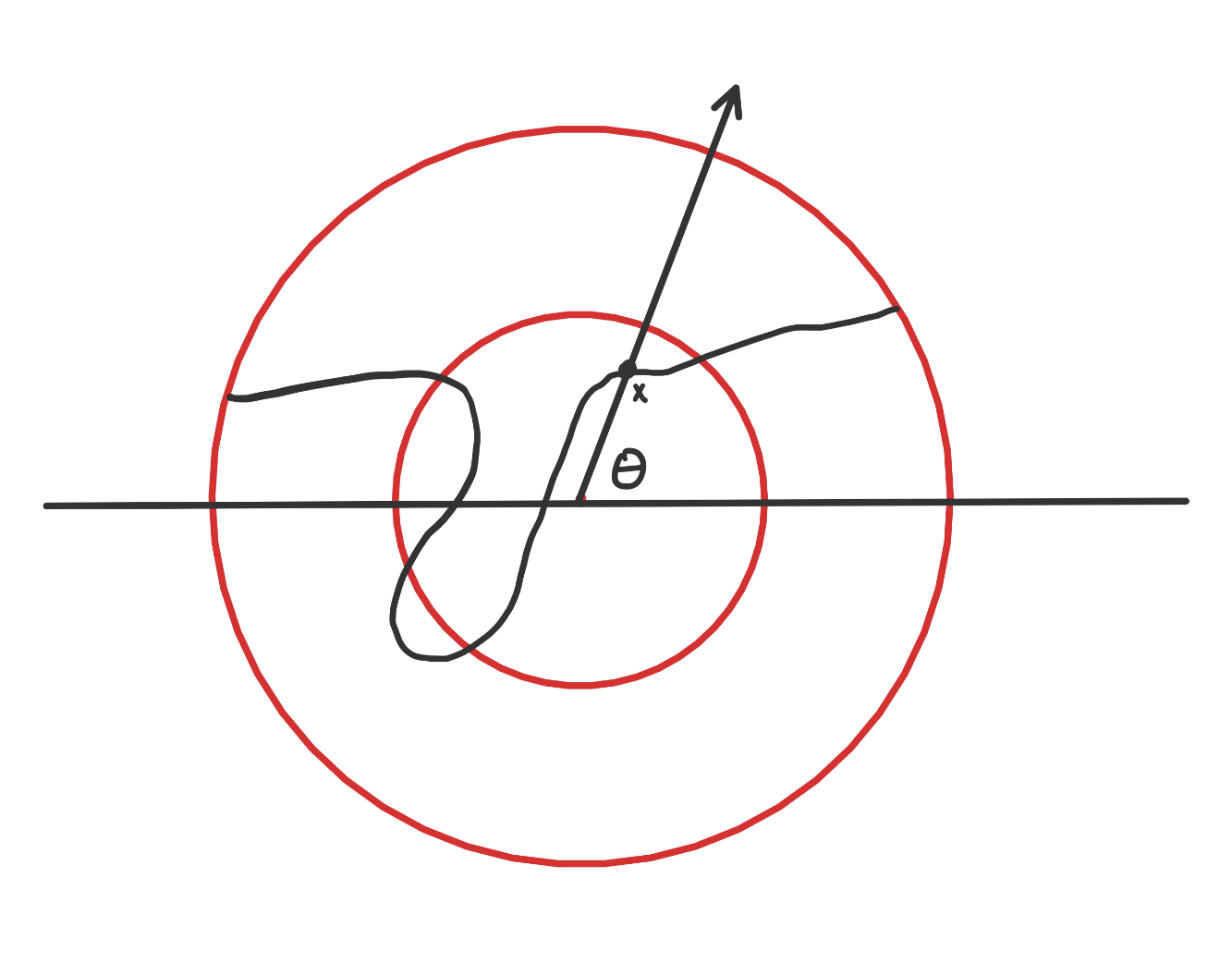}
	\caption{Angle of reflection $\theta$ for $x\in\gamma$}
	\label{fig:involution_angle}
\end{figure}

\begin{remark}
	Therefore, \underline{Lemma~\ref{farber4}} applies. 
	%% changed "with all the pieces in place, we can see that" 
	%% to "Therefore, 
	Thus for all $j \geq 0$, $H_j(A_\gamma)$ is free abelian with rank equal to the number of critical points
	%% changed \Gamma to A_\gamma
	with index $j$, i.e. $a_j + b_{j + 1}$. 
\end{remark}

%% wiggling curve but maintaining intersections should make a homeomorphic motion space
%% redo introduction
%% talk about how we go beyond farber, not just working with smooth manifolds
%% might be better to talk about our 3 cases, that there's one we haven't done yet
%%%% could write as a conjecture, question for future work

%%%%%%%%
%% acknowledgements
%%%%%%%%
\section*{Acknowledgements} % less flowery for submissions

Thank you to my thesis advisor, Professor Oleg Lazarev, for supporting me during this process. 
Thanks to Professors Alfred No{\"e}l and Stephen Jackson for pushing me to do an independent
study and thesis.

%%%%%%%%
%%bibliography
%%%%%%%%

\end{document}